\newtheorem{theo}{Theorem}
\newtheorem{prop}[theo]{Proposition}
\newtheorem{cor}[theo]{Corollary}
\newtheorem{lemma}[theo]{Lemma}
\newtheorem{conj}[theo]{Conjecture}
\theoremstyle{remark}
\theoremstyle{remark}
\theoremstyle{remark}
\begin{document}

\title{Maximal subgroups of the modular and other groups}

\author{Gareth A. Jones}

\address{School of Mathematical Sciences, University of Southampton, Southampton SO17 1BJ, UK}
\email{G.A.Jones@maths.soton.ac.uk}

\subjclass[2010]{Primary 20E28; 
secondary 05C10, 
20B15, 
20F05, 
20H05, 
20H10, 
57M07. 
}
\keywords{Maximal subgroup, nonparabolic subgroup, triangle group, modular group, Hecke group, Neumann subgroup, map, primitive permutation group}
\maketitle


\begin{abstract}
In 1933 B.~H.~Neumann constructed uncountably many subgroups of ${\rm SL}_2(\mathbb Z)$ which act regularly on the primitive elements of $\mathbb Z^2$. As pointed out by Magnus, their images in the modular group ${\rm PSL}_2(\mathbb Z)\cong C_3*C_2$ are maximal nonparabolic subgroups, that is, maximal with respect to containing no parabolic elements.
We strengthen and extend this result by giving a simple construction using planar maps to show that for all integers $p\ge 3$, $q\ge 2$ the triangle group $\Gamma=\Delta(p,q,\infty)\cong C_p*C_q$ has uncountably many conjugacy classes of nonparabolic maximal subgroups. We also extend results of Tretkoff and of Brenner and Lyndon for the modular group by constructing uncountably many conjugacy classes of such subgroups of $\Gamma$ which do not arise from Neumann's original method. These maximal subgroups are all generated by elliptic elements, of finite order, but a similar construction yields uncountably many conjugacy classes of torsion-free maximal subgroups of the Hecke groups $C_p*C_2$ for odd $p\ge 3$. Finally, an adaptation of work of Conder yields  uncountably many conjugacy classes of maximal subgroups of $\Delta(2,3,r)$ for all $r\ge 7$.
\end{abstract}


\section{Introduction}\label{intro}

In response to a question of Schmidt concerning the foundations of geometry, B.~H.~Neumann~\cite{Neu33} constructed uncountably many subgroups of ${\rm SL}_2({\mathbb Z})$ acting regularly on the primitive elements of ${\mathbb Z}^2$ (those with coprime coordinates, or equivalently the members of bases for ${\mathbb Z}^2$). Magnus~\cite{Mag73} (see also~\cite[\S III.2]{Mag74}) showed that their images in the modular group $\Gamma={\rm PSL}_2({\mathbb Z})$ are what he called {\sl Neumann subgroups}, those complemented by the maximal parabolic subgroup $P$ generated by the M\"obius transformation $t\mapsto t+1$, which implies that they are maximal nonparabolic subgroups, that is, maximal with respect to containing no parabolic elements of $\Gamma$. Magnus conjectured in~\cite{Mag73} that Neumann had constructed all the maximal nonparabolic subgroups of $\Gamma$, but subsequently C.~Tretkoff~\cite{Tre} produced further examples of Neumann subgroups not arising from Neumann's construction, while Brenner and Lyndon~\cite{BL83MA} (see also~\cite{Lyn}) found further examples of maximal nonparabolic subgroups of $\Gamma$ which are not Neumann subgroups.

In algebraic map theory~\cite{Jon79, JS} subgroups of $\Gamma$ correspond to trivalent (or triangular) oriented maps; nonparabolic subgroups correspond to trivalent maps with no finite faces, and among these, Neumann subgroups correspond to those with a single (infinite) face. In this paper simple constructions of maps will be used to extend the above results by producing uncountably many conjugacy classes of nonparabolic maximal subgroups, that is, subgroups which are both nonparabolic and maximal (as in fact most of Neumann's are), in a much wider class of hyperbolic triangle groups:

\begin{theo}\label{CpCqnonpara}
For each pair of integers $p\ge 3$ and $q\ge 2$ the triangle group $\Gamma=\Delta(p,q,\infty)\cong C_p*C_q$ has uncountably many conjugacy classes of nonparabolic maximal subgroups.
\end{theo}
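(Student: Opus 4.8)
The plan is to work through the standard dictionary between subgroups of $\Gamma=C_p*C_q$ and maps, as in the references cited above, and then to exhibit an explicit uncountable family. Write $\Gamma=\langle X\rangle*\langle Y\rangle$ with $|X|=p$, $|Y|=q$, so that the parabolic elements of $\Gamma$ are exactly the conjugates of the nontrivial powers of $XY$. A subgroup $H\le\Gamma$ gives, via the action of $\Gamma$ on the cosets $\Omega:=\Gamma/H$, a transitive pair of permutations $x,y$ of $\Omega$ with $x^p=y^q=1$, i.e.\ a connected oriented map (a hypermap if $q\ge3$) of type $(p,q)$: the darts are the cosets, the two kinds of vertex are the cycles of $x$ and of $y$, and the faces are the cycles of $xy$. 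Conversely every such map comes from a subgroup, unique up to conjugacy. Under this correspondence $H$ is nonparabolic precisely when $xy$ has no finite cycle on $\Omega$ --- so that no conjugate of a power of $XY$ fixes a coset --- that is, precisely when every face of the map is infinite; and $H$ is maximal precisely when $\Gamma$ acts primitively on $\Omega$. (A nonparabolic subgroup automatically has infinite index, since $xy$ then has no finite cycle on a finite set.) So it suffices to construct $2^{\aleph_0}$ pairwise non-isomorphic connected maps of type $(p,q)$ that are primitive and have no finite face.

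These maps I would build in the plane from finitely many finite ``building blocks'', in the spirit of Neumann's original construction but with extra freedom. Fix a small, rigid, asymmetric repertoire of finite planar partial maps --- concretely a fixed anchor block $A$ and two blocks $B_0\ne B_1$, their precise shape depending a little on $p$ and $q$ --- each carrying a distinguished pair of free half-edges on its boundary. For each binary sequence $\underline\varepsilon=(\varepsilon_1,\varepsilon_2,\dots)\in\{0,1\}^{\mathbb N}$ form a connected planar map $M_{\underline\varepsilon}$ by welding $A,B_{\varepsilon_1},B_{\varepsilon_2},\dots$ into a one-sided infinite chain along a ``spine'', matching successive half-edges so that every vertex ends with valency dividing $p$ or dividing $q$. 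The blocks are designed with two features. First, a face-boundary walk that enters a block is always forced to move on to the next block and can never turn back, so every face of $M_{\underline\varepsilon}$ runs off along the spine and is infinite; thus $H_{\underline\varepsilon}$ is nonparabolic. Second, the asymmetric anchor $A$ and the fact that $B_0\ne B_1$ make the isomorphism type of $M_{\underline\varepsilon}$ determine $\underline\varepsilon$, so distinct sequences give non-isomorphic maps and, by the dictionary, distinct conjugacy classes of subgroups $H_{\underline\varepsilon}$; with $2^{\aleph_0}$ sequences this already produces uncountably many classes. (With a two-sided spine the only ambiguity would be a shift, giving countable fibres, and one still gets $2^{\aleph_0}$ classes.)

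The substance of the proof is the primitivity of $\Gamma$ on $\Omega_{\underline\varepsilon}$, equivalently the maximality of $H_{\underline\varepsilon}$ in $\Gamma$. I would argue by contradiction: let $\approx$ be a $\Gamma$-invariant equivalence relation on the darts with some class of size $>1$ and not all of $\Omega$; in a transitive action all classes have a common size, and the quotient by $\approx$ would be a proper nontrivial $(p,q)$-map on which $\Gamma$ acts. Using the rigidity engineered into the blocks --- their individual asymmetry, $B_0\ne B_1$, and the absence of any ``shift'' symmetry of the anchored spine --- one shows that a single identification $\omega\approx\omega'$ of distinct darts forces, on applying suitable words in $x$ and $y$, more and more identifications, until $\approx$ is the universal relation, a contradiction; hence the only congruences are trivial and the action is primitive. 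This is the main obstacle: nonparabolicity is a purely local property of the blocks and the counting of conjugacy classes is soft, but ruling out \emph{every} nontrivial block system on an infinite set is exactly where the combinatorics of the blocks must be used carefully, and producing blocks that achieve this simultaneously for all $p\ge3$ and $q\ge2$ is the delicate point. Granting it, the uncountably many non-isomorphic $M_{\underline\varepsilon}$ yield uncountably many conjugacy classes of nonparabolic maximal subgroups of $\Gamma$, which is Theorem \ref{CpCqnonpara}.
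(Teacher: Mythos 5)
Your framework is the right one --- the dictionary between conjugacy classes of subgroups and maps of type $(p,q)$, the identification of ``nonparabolic'' with ``no finite face'' and of ``maximal'' with ``primitive'' --- but the proof has a genuine gap exactly where you acknowledge it: the primitivity of $\Gamma$ on $\Omega_{\underline\varepsilon}$ is asserted to follow from ``rigidity engineered into the blocks'', with no mechanism actually given, and the argument concludes only ``granting it''. Ruling out every nontrivial $\Gamma$-invariant equivalence relation on an infinite set is the entire content of the theorem, and an appeal to asymmetry of the blocks does not achieve it: asymmetry controls the automorphism group of the map, but an invariant block system need not arise from an automorphism (your own observation that the quotient is again a $(p,q)$-map shows the classes can be infinite and distributed along the spine in ways invisible to any symmetry of $M_{\underline\varepsilon}$). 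Without a concrete device that converts one nontrivial identification into the universal relation, the proposal establishes only uncountably many conjugacy classes of nonparabolic subgroups, not their maximality.

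The paper closes precisely this gap by insisting that the maps have a \emph{single} infinite face, i.e.\ that they are Neumann maps: then $z=(xy)^{-1}$ has one cycle, $\Omega$ is identified with $\mathbb Z$ so that $z$ acts as $i\mapsto i+1$, and any nontrivial $\Gamma$-invariant equivalence relation, being $z$-invariant, must be congruence mod~$(n)$ for some $n\ge 2$. This reduces primitivity to a countable list of explicit candidates, each of which is eliminated by an elementary fixed-point lemma: if $x$ fixes a label $i$ and $y$ fixes a label $j$ with $i\equiv j$ mod~$(n)$, then the class of $i$ is invariant under $\langle X,Y\rangle=\Gamma$, hence equals $\Omega$ by transitivity, forcing $n=1$. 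The explicit maps $\mathcal N_p$ and $\mathcal N_{p,q}$ are built so that $x$ fixes the label $0$ and $y$ fixes a multiple of every $n\ge 2$ (the labels $3n$, resp.\ $2n$), and uncountability is obtained by varying $1$-valent vertices on negatively labelled edges, which leaves the positive labels --- and hence the whole primitivity argument --- untouched. If you wish to salvage your block construction, you should either arrange a single face and place fixed points of $x$ and $y$ so as to meet every congruence class, or supply an explicit word-pushing argument in place of the missing rigidity step.
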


Of course, if $p, q$ and $r$ are all integers then every subgroup of $\Delta(p,q,r)$ is nonparabolic, since this group, being cocompact, has no parabolic elements. 

Neumann subgroups and their maximality properties have also been studied geometrically by Kulkarni in~\cite{Kul}. In one sense he does so in a wider context, since he takes $\Gamma$ to be a free product of an arbitrary finite number of finite cyclic groups. On the other hand, his main theorem on maximality requires all these cyclic groups to have prime order, which is not a requirement here.

After some preliminary results in \S\ref{permutations}, we will prove Theorem~\ref{CpCqnonpara}, dealing first with the Hecke groups $\Delta(p,2,\infty)\cong C_p*C_2$ in \S\ref{proofq=2}, and then with the groups $\Delta(p,q,\infty)\cong C_p*C_q$ for $p, q\ge 3$ in \S\ref{proofq>2}. The nonparabolic maximal subgroups constructed there are all free products of cyclic groups of order $p$ or $q$, but in \S\ref{structure} we will use a similar method to construct uncountably many conjugacy classes of torsion-free maximal subgroups, each freely generated by an infinite set of parabolic elements, in the Hecke groups $\Delta(p,2,\infty)$ for all odd $p\ge 3$. In \S\ref{Neumannrev}, \S\ref{Tretkoffrev} and \S\ref{B&Lrev} we will revisit the constructions by Neumann, by Tretkoff and by Brenner and Lyndon, reinterpreting them in terms of maps, and showing how their results can be extended to other hyperbolic triangle groups $\Delta(p,q,\infty)$. In \S\ref{conseq} we will briefly consider some consequences and generalisations to other groups, such as cocompact hyperbolic groups $\Delta(p,q,r)$.


\section{Neumann permutations, subgroups and maps}\label{permutations}

Let $p$ and $q$ be integers such that $p\ge 3$ and $q\ge 2$. We define a {\sl Neumann permutation\/} of type $(p,q)$ to be a permutation $y$ of $\mathbb Z$ such that
\[(yz)^p=y^q=1,\]
where $z$ is the translation $i\mapsto i+1$, and $1$ denotes the identity permutation. For the case $p=3, q=2$ see~\cite{Neu33}, and also~\cite{Mag73} and~\cite[Lemma~3.6]{Mag74} where the function $f$ plays the role of $y$.

For $p\ge 3$ and $q\ge 2$ the triangle group
\[\Gamma=\Delta(p,q,\infty)=\langle X, Y, Z\mid X^p=Y^q=XYZ=1\rangle\cong C_p*C_q\]
is a group of orientation-preserving isometries of the hyperbolic plane $\mathbb H$, where $X$ and $Y$ are elliptic elements (with a unique fixed point in $\mathbb H$) and $Z$ is parabolic (with a unique fixed point in $\partial\mathbb H$). Extending Magnus's definition in~\cite{Mag73, Mag74} for subgroups of the modular group $\Delta(3,2,\infty)$, let us define a {\sl Neumann subgroup\/} of $\Gamma$ to be a subgroup $M$ which complements the maximal parabolic subgroup $P=\langle Z\rangle$ of $\Gamma$, that is, $\Gamma=MP$ and $M\cap P=1$.

We define a {\sl Neumann map of type $(p,q)$} to be an infinite oriented bipartite map $\mathcal N$ with one face, with the vertices in the two parts coloured black and white, and every black or white vertex having valency dividing $p$ or $q$ respectively. (Combinatorialists may recognise this as the Walsh bipartite map~\cite{Wal} for the hypermap corresponding to this representation of $\Gamma$.) If $q=2$ we can simplify $\mathcal N$ to a map $\mathcal N^{\dagger}$ by omitting all the white vertices, leaving free or non-free edges where vertices of valency $1$ or $2$ are removed, so that the directed edges of $\mathcal N^{\dagger}$ correspond to the edges of $\mathcal N$; there is no loss of information in doing this, since $\mathcal N$ can be recovered by adding a white vertex to each edge of $\mathcal N^{\dagger}$, of valency $1$ or $2$ as the edge is free or not. 

For a given pair $p, q$ there are natural bijections between (isomorphism classes of) these three sets of objects. Given a Neumann permutation $y$ we define a permutation representation of $\Gamma$ on $\mathbb Z$ by sending $X, Y$ and $Z$ to the permutations $x:=(yz)^{-1}$, $y$ and $z$. This representation is transitive, and $P$ acts regularly, so it complements the subgroup $M$ of $\Gamma$ fixing a particular integer. This stabiliser $M$ is uniquely determined up to conjugacy. Conversely, given a Neumann subgroup $M$ of $\Gamma$, we can use the powers of $Z$ as coset representatives of $M$, so that $Z$, acting on the cosets of $M$, induces the translation $z:i\mapsto i+1$ on $\mathbb Z$, while $Y$ induces a permutation $y$ satisfying $(yz)^p=y^q=1$.

As in the more general algebraic theory of maps~\cite{JS}, a Neumann map $\mathcal N$ determines a triple of permutations $x, y, z$ of the set $\Omega$ of its edges, with $x$ and $y$ using the orientation of the surface to rotate edges around their incident black and white vertices, so that $x^p=y^q=1$, while $z:=(xy)^{-1}$ follows the orientation around the unique face, so that it has a single (infinite) cycle on $\Omega$; given any chosen edge $\alpha$ we can identify each edge $\beta=\alpha z^i\in\Omega$ with the integer $i$, so that $z$ acts as the translation $i\mapsto i+1$ on $\mathbb Z$, and $y$ is a Neumann permutation. Conversely, given a Neumann permutation $y$ one can reconstruct $\mathcal N$ from the permutations $x:=(yz)^{-1}$, $y$ and $z$ of $\mathbb Z$, with edges corresponding to elements of $\mathbb Z$, and black and white vertices corresponding to the cycles of $x$ and $y$, so that the cyclic order of incident edges determines the local orientation around each vertex.

The significance of Neumann permutations, subgroups and maps lies in the following simple result (see~\cite[Theorem 4]{Mag73} and~\cite[Theorem~3.4(i)]{Mag74} for the case $p=3, q=2$, and~\cite[Prop.~2.1]{Kul} for a more general result):

\begin{prop}
Each Neumann subgroup $M$ is a maximal nonparabolic subgroup of $\Gamma$.
\end{prop}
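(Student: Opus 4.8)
The plan is to verify the two defining properties in turn, using only the complement identity $\Gamma=MP$, $M\cap P=1$ together with one standard fact about the triangle group: since $\Gamma\backslash\mathbb H$ has a single cusp, corresponding to the conjugacy class of the maximal parabolic subgroup $P=\langle Z\rangle$, every parabolic element of $\Gamma$ is conjugate to a nontrivial power of $Z$.

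First I would show that $M$ is nonparabolic. Suppose, for a contradiction, that $M$ contains a parabolic element $u$. By the fact just quoted, $u=gZ^kg^{-1}$ for some $g\in\Gamma$ and some $k\ne 0$. Using $\Gamma=MP$, write $g=mw$ with $m\in M$ and $w\in P$. Since $P=\langle Z\rangle$ is abelian, $w$ commutes with $Z^k$, so $u=mwZ^kw^{-1}m^{-1}=mZ^km^{-1}$, and hence $Z^k=m^{-1}um\in M$ because $m,u\in M$. Then $Z^k\in M\cap P=1$, forcing $k=0$, a contradiction. In particular $Z\notin M$, so $M\ne\Gamma$.

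Next I would establish maximality among nonparabolic subgroups. Let $H$ be any nonparabolic subgroup with $M\le H\le\Gamma$, and take $h\in H$. Writing $h=mZ^k$ with $m\in M$ and $Z^k\in P$ (possible since $\Gamma=MP$), we get $Z^k=m^{-1}h\in H$ because $m\in M\le H$. If $k\ne 0$ then $Z^k$ is a nontrivial power of $Z$, hence parabolic, contradicting the hypothesis that $H$ contains no parabolic elements; so $k=0$ and $h=m\in M$. Thus $H=M$, and combined with the previous paragraph this says exactly that $M$ is a maximal nonparabolic subgroup of $\Gamma$.

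Finally I would remark that this argument is a straightforward generalisation of Magnus's proof for the modular group: the only input specific to $\Gamma=\Delta(p,q,\infty)$ is the description of its parabolic conjugacy classes, and there is essentially no obstacle beyond marshalling that fact correctly, since everything else is forced by the complement property of $M$.
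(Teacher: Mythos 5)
Your proof is correct and follows essentially the same route as the paper: maximality is established exactly as there, by writing an element of a putative larger nonparabolic subgroup as $mZ^k$ with $m\in M$ and extracting the parabolic element $Z^k\ne 1$. The only difference is in showing $M$ is nonparabolic: the paper reads this off from the permutation representation (the translation $z$ has no finite cycles on $\mathbb Z$), while you derive it algebraically from $\Gamma=MP$, $M\cap P=1$ and the commutativity of $P$ --- two equivalent formulations of the same fact.
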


\begin{proof}
The parabolic elements of $\Gamma$ are the conjugates of the non-identity powers of $Z$. Since $z$ has no finite cycles on $\mathbb Z$ (equivalently, the map $\mathcal N$ has no finite faces), there are no such elements in the stabiliser $M$, which is therefore a nonparabolic subgroup of $\Gamma$. It is, in fact, maximal with respect to this property, for if a subgroup $M^*$ of $\Gamma$ properly contains $M$, it must contain a coset $Mg\ne M$ of $M$ in $\Gamma$, and hence contains the corresponding coset representative $Z^i\ne 1$, which is parabolic.
\end{proof}

Non-isomorphic Neumann maps give inequivalent permutation representations of $\Gamma$, and hence distinct conjugacy classes of stabilisers $M$. We will show that for each pair $p\ge 3$ and $q\ge 2$ there are $2^{\aleph_0}$ isomorphism classes of Neumann maps of type $(p,q)$, so we obtain $2^{\aleph_0}$ distinct conjugacy classes of maximal nonparabolic subgroups of $\Gamma$. In particular, by taking $p=3$ and $q=2$ we obtain uncountably many maximal nonparabolic subgroups of the modular group (for background on this group, see~\cite{Mag74} or~\cite[Ch.~6]{JS87}). In~\cite{Neu33} Neumann used a different method to construct such subgroups, involving a rather complicated construction of suitable permutations of $\mathbb Z$, though it is conceivable that his purely algebraic approach was originally based on combinatorial or topological ideas.

In fact, for any $p\ge 3$ and $q\ge 2$, many of the Neumann subgroups $M$ of $\Gamma$ are maximal as subgroups of $\Gamma$, not just as nonparabolic subgroups. This is equivalent to $\Gamma$ acting primitively on $\Omega$, that is, preserving no equivalence relations on $\Omega$ other than the identity and universal relations. Now we can identify $\Omega$ with $\mathbb Z$ as above, so that $Z$ acts on $\mathbb Z$ by $z:i\mapsto i+1$. Any nontrivial $\Gamma$-invariant equivalence relation $\sim$ on $\Omega$ must therefore induce a nontrivial translation-invariant equivalence relation on $\mathbb Z$, and the only possibility for this is congruence mod~$(n)$ for some integer $n\ge 2$. Since $\Gamma=\langle Y, Z\rangle$, the relation $\sim$ will then be $\Gamma$-invariant if and only if it is preserved by $y$, that is, $i\equiv j$ mod~$(n)$ implies $iy\equiv jy$ mod~$(n)$. In many cases, for each $n$ there will be some pair $i, j$ for which this implication fails, so that $\Gamma$ acts primitively and the stabilisers $M$ are maximal subgroups. Indeed, it is easy to construct Neumann maps $\mathcal N$ for which this happens. For example:

\begin{lemma}\label{notinv}
Suppose that $x$ fixes $i$ and $y$ fixes $j$, where $i\equiv j$ mod~$(n)$ for some integer $n\ge 2$. Then congruence mod~$(n)$ is not $\Gamma$-invariant.
\end{lemma}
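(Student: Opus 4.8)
The plan is to reduce to a statement about the single permutation $y$ and then run a one-line congruence argument. Recall from the paragraph preceding the lemma that congruence mod~$(n)$ is automatically invariant under $z\colon i\mapsto i+1$, and that $\Gamma=\langle Y,Z\rangle$; hence congruence mod~$(n)$ is $\Gamma$-invariant if and only if it is preserved by $y$. So it suffices to exhibit a pair of integers congruent mod~$(n)$ whose images under $y$ are not.

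First I would convert the hypothesis ``$x$ fixes $i$'' into information about $y$ alone. From the definition $x=(yz)^{-1}$ (equivalently, from the relation $XYZ=1$) we have $xy=z^{-1}$, so applying $y$ on the right to the equation $ix=i$ gives
\[
iy=(ix)y=i(xy)=iz^{-1}=i-1 .
\]
Thus $y$ carries $i$ to $i-1$, whereas by hypothesis $y$ fixes $j$, i.e.\ $jy=j$.

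Finally I would argue by contradiction. If congruence mod~$(n)$ were preserved by $y$, then from $i\equiv j$ mod~$(n)$ we would obtain $iy\equiv jy$ mod~$(n)$, that is $i-1\equiv j$ mod~$(n)$; combined with $j\equiv i$ mod~$(n)$ this yields $i-1\equiv i$ mod~$(n)$, forcing $n\mid 1$ and contradicting $n\ge 2$. Hence congruence mod~$(n)$ is not preserved by $y$, and therefore, by the reduction above, not $\Gamma$-invariant.

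I do not anticipate any real obstacle: once the hypotheses are correctly translated, the argument is a single modular computation. The only place that needs care is the bookkeeping with conventions when passing from ``$x$ fixes $i$'' to the relation $iy=i-1$ — one must be consistent about permutations acting on the right and about the direction of the shift $z$. In fact even the precise sign is immaterial: all that is used is that $iy$ and $i$ differ by $\pm1$, so they cannot be congruent mod~$(n)$ for any $n\ge 2$, while $j$ is fixed by $y$.
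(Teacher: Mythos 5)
Your proof is correct, but it takes a genuinely different route from the paper's. The paper argues structurally: assuming $\Gamma$-invariance, the common congruence class $E=[i]=[j]$ satisfies $Ex=E$ (because $ix=i$) and $Ey=E$ (because $jy=j$), hence is invariant under $\Gamma=\langle X,Y\rangle$; transitivity of $\Gamma$ on $\Omega$ then forces $E=\Omega$, i.e.\ $n=1$. You instead exploit the other generating pair $\Gamma=\langle Y,Z\rangle$: since the translation $z$ automatically preserves congruence mod~$(n)$, invariance reduces to preservation by $y$ alone, and the identity $xy=z^{-1}$ (which does follow from $x=(yz)^{-1}$ under the paper's right-action conventions) converts the hypothesis $ix=i$ into $iy=i-1$; then $jy=j$ and $i\equiv j$ give $i-1\equiv i$ mod~$(n)$, so $n\mid 1$. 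Both arguments are complete and short. The trade-off is mild: the paper's version uses only that each of $x$ and $y$ fixes a point of the class $E$ together with transitivity, and never needs the explicit identification of $\Omega$ with $\mathbb Z$ beyond the statement of the congruence, whereas yours is a one-line modular computation that leans on $z$ acting as $i\mapsto i+1$ --- an identification that is indeed in force everywhere the lemma is applied (including the bipartite setting of Theorem~\ref{q3}), so nothing is lost. Your closing remark that only $|iy-i|=1$ matters, not its sign, is a sensible guard against orientation/handedness conventions.
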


\begin{proof}
Suppose that congruence mod~$(n)$ is $\Gamma$-invariant. Let $E$ denote the congruence class $[i]=[j]$ of $i$ and $j$ mod~$(n)$. Since $ix=i$ we have $Ex=E$, and similarly $jy=j$ implies that $Ey=E$. Since $\Gamma=\langle X, Y\rangle$ it follows that $E$ is invariant under $\Gamma$. But $\Gamma$ acts transitively on $\Omega$, and $E\ne\emptyset$, so $E=\Omega$. Thus $n=1$, against our hypothesis.
\end{proof}


\section{Proof of Theorem 1: the case $q=2$}\label{proofq=2}

By using the ideas in the preceding section we obtain a simple proof of a generalisation of Neumann's result:

\begin{cor}\label{p3}
For each integer $p\ge 3$ the Hecke group $\Gamma=\Delta(p,2,\infty)\cong C_p*C_2$ has uncountably many conjugacy classes of nonparabolic maximal subgroups.
\end{cor}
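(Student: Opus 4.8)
The plan is to construct, for each integer $p\ge 3$, uncountably many non-isomorphic Neumann maps of type $(p,2)$, and then invoke the machinery of \S\ref{permutations}: non-isomorphic Neumann maps give inequivalent transitive permutation representations of $\Gamma=\Delta(p,2,\infty)$ on $\mathbb Z$, hence non-conjugate stabilisers $M$, each of which is a maximal nonparabolic subgroup by the Proposition. To get the full force of \emph{maximal} (not merely maximal nonparabolic), I would arrange each of these maps to satisfy the hypothesis of Lemma~\ref{notinv}, so that no congruence mod $(n)$ with $n\ge 2$ is $\Gamma$-invariant and hence $\Gamma$ acts primitively on $\Omega=\mathbb Z$.

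Concretely, I would work with the simplified maps $\mathcal N^{\dagger}$ (valid since $q=2$): a map whose black vertices all have valency dividing $p$, with free edges permitted, a single infinite face, and realizing $z:i\mapsto i+1$. The simplest building block is a string of black vertices of valency $p$ placed along a line, joined into a one-faced map, with occasional modifications; the idea is to encode an arbitrary infinite binary sequence $(\varepsilon_k)_{k\ge 1}$ by making a local choice at the $k$-th ``site'' — for instance, whether a given edge is free or not, or which of two combinatorially distinct local gluings of a $p$-valent vertex is used — with the sites spaced far enough apart (say at positions growing rapidly, like $10^k$) that the sequence can be recovered from the isomorphism type of the map. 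Since an isomorphism of one-faced maps must carry the distinguished infinite face to itself and hence is determined up to the finite symmetry of the boundary walk, distinct sequences will give non-isomorphic maps, yielding $2^{\aleph_0}$ isomorphism classes. I would also plant a single fixed point of $x$ and a single fixed point of $y$ (a free edge of $\mathcal N^{\dagger}$, i.e.\ a valency-$1$ white vertex in $\mathcal N$, together with a black vertex of valency $1$, or more economically a valency-$2$ white vertex paired against a $p$-valent black vertex so that some edge is fixed by $x$) at congruent positions mod every $n$; the cleanest way is to put both at position $0$ by a single local gadget, so that Lemma~\ref{notinv} applies for all $n\ge 2$ simultaneously.

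The verifications I would then carry out are routine: (i) that each constructed permutation $y$ is a genuine Neumann permutation, i.e.\ $(yz)^p=y^2=1$ and the map has one face (equivalently $z=(xy)^{-1}$ is a single infinite cycle), which follows from the explicit local structure and the one-faced gluing; (ii) that the assignment of binary sequences to isomorphism classes is injective; and (iii) that the planted fixed points of $x$ and $y$ survive all the local modifications. Then Theorem~\ref{CpCqnonpara} in the case $q=2$ follows, and in fact we get the stronger statement that uncountably many of these subgroups are maximal in $\Gamma$.

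The main obstacle is (ii): showing that one-faced bipartite maps built from nearly identical local pieces are non-isomorphic whenever their defining sequences differ. One cannot simply count vertices or edges, since all these maps are infinite and locally alike; the argument has to exploit that a map isomorphism is a bijection of edge-sets commuting with $x$ and $y$, hence with $z$, so it acts on $\Omega=\mathbb Z$ as a translation-commuting bijection, i.e.\ (if orientation-preserving) a translation $i\mapsto i+c$ — and then the widely spaced, aperiodic placement of the modification sites forces $c=0$ and the sequences to agree. Getting this rigidity argument clean — in particular handling possible orientation-reversing isomorphisms and the finite residual symmetry of the boundary walk, and ensuring the spacing of sites genuinely defeats any translation — is the part that needs care; everything else reduces to inspecting a handful of explicit local configurations.
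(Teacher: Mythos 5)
Your overall strategy---Neumann maps of type $(p,2)$, Lemma~\ref{notinv} for primitivity, local modifications encoding a binary sequence---is the paper's strategy, but the step on which maximality hinges fails as you have set it up. You propose to plant a \emph{single} fixed point of $x$ and a \emph{single} fixed point of $y$ ``at congruent positions mod every $n$'', ideally ``both at position $0$ by a single local gadget''. This is impossible: two integers are congruent mod every $n\ge 2$ only if they are equal, and a directed edge fixed by both $x$ and $y$ is fixed by all of $\Gamma=\langle X,Y\rangle$, contradicting transitivity on the infinite set $\Omega$ (in the map it would be an isolated edge forming its own connected component). So necessarily $i_0\ne j_0$ for your two planted fixed points, and Lemma~\ref{notinv} then rules out congruence mod~$(n)$ only for the finitely many $n$ dividing $i_0-j_0$; for all other $n$ you have no argument, and primitivity---hence maximality, which is what distinguishes the Corollary from the bare statement that Neumann subgroups are maximal \emph{nonparabolic}---is not established. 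The repair is to use infinitely many fixed points of one of the generators, positioned so that every $n\ge 2$ divides the label of one of them: in the paper's map $\mathcal N_p$ the generator $x$ fixes the edge labelled $0$ while $y$ fixes the edges labelled $3n$ for all $n$, and Lemma~\ref{notinv} is applied with $i=0$, $j=3n$ separately for each $n\ge 2$. (Your parenthetical gadget ``a valency-$2$ white vertex paired against a $p$-valent black vertex'' also does not produce a fixed point of $x$; an edge is fixed by $x$ exactly when its black endpoint has valency $1$.)

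The rigidity problem you single out as the main obstacle is dealt with more cheaply than you expect, by an arrangement that simultaneously disposes of your verification (iii). The paper confines all the sequence-encoding modifications (adding $1$-valent black vertices to free edges) to the edges with \emph{negative} labels: this leaves every positive label, and hence the entire primitivity argument, untouched, and it makes non-isomorphism easy because---as you yourself observe---an isomorphism of Neumann maps conjugates $z$ to $z$ and so acts on $\mathbb Z$ as a translation; since the positive half of the map is identical and aperiodic across the whole family, the translation is forced to be trivial and distinct subsets of modified negative free edges give distinct fixed-point sets of $x$, hence non-isomorphic maps. No careful spacing of sites at positions $10^k$ is needed, and no orientation-reversing isomorphisms arise since the maps are oriented.
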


\begin{figure}[h!]
\begin{center}
\begin{tikzpicture}[scale=0.17, inner sep=0.8mm]

\node (0) at (0,0) [shape=circle, fill=black] {};
\node (1) at (10,0) [shape=circle, fill=black] {};
\node (2) at (20,0) [shape=circle, fill=black] {};
\node (3) at (30,0) [shape=circle, fill=black] {};
\node (4) at (40,0) [shape=circle, fill=black] {};
\node (5) at (50,0) [shape=circle, fill=black] {};
\node (6) at (65,0) [shape=circle, fill=black] {};
\node (7) at (75,0) [shape=circle, fill=black] {};
\node (8) at (85,0) [shape=circle, fill=black] {};

\draw [thick] (0,0) to (54,0);
\draw [thick,dashed] (54,0) to (61,0);
\draw [thick] (61,0) to (89,0);
\draw [thick,dashed] (89,0) to (93,0);
\draw [thick] (1) to (10,10);
\draw [thick] (2) to (20,10);
\draw [thick] (4) to (40,10);
\draw [thick] (7) to (75,10);

\node at (45.5,-1.5) {$6-4p$};
\node at (35.5,-1.5) {$4-3p$};
\node at (25.5,-1.5) {$3-2p$};
\node at (16,-5) {$-p$};
\node at (16,-1.5) {$1-p$};
\node at (6,-5) {$-2$};
\node at (6,-1.5) {$-1$};
\node at (2.5,1.5) {$0$};
\node at (8.5,5) {$1$};
\node at (12.5,1.5) {$2$};
\node at (18.5,5) {$3$};
\node at (22.5,1.5) {$4$};
\node at (33,1.5) {$5$};
\node at (38.5,5) {$6$};
\node at (43,1.5) {$7$};
\node at (69,1.5) {$3n-1$};
\node at (73,5) {$3n$};
\node at (79,1.5) {$3n+1$};

\draw [thick] (7,-10) to (1) to (13,-10);
\draw [dashed] (7.7,-10) to (12.3,-10);
\node at (10,-13) {$(p-3)$};

\draw [thick] (17,-10) to (2) to (23,-10);
\draw [dashed] (17.7,-10) to (22.3,-10);
\node at (20,-13) {$(p-3)$};

\draw [thick] (27,-10) to (3) to (33,-10);
\draw [dashed] (27.7,-10) to (32.3,-10);
\node at (30,-13) {$(p-2)$};

\draw [thick] (37,-10) to (4) to (43,-10);
\draw [dashed] (37.7,-10) to (42.7,-10);
\node at (40,-13) {$(p-3)$};

\draw [thick] (47,-10) to (5) to (53,-10);
\draw [dashed] (47.5,-10) to (52.5,-10);
\node at (50,-13) {$(p-2)$};

\draw [thick] (62,-10) to (6) to (68,-10);
\draw [dashed] (62.7,-10) to (67.5,-10);
\node at (65,-13) {$(p-2)$};

\draw [thick] (72,-10) to (7) to (78,-10);
\draw [dashed] (72.7,-10) to (77.7,-10);
\node at (75,-13) {$(p-3)$};

\draw [thick] (82,-10) to (8) to (88,-10);
\draw [dashed] (82.7,-10) to (87.7,-10);
\node at (85,-13) {$(p-2)$};

\node at (57.5,8) {$F$};

\end{tikzpicture}
\end{center}
\caption{A $p$-valent Neumann map $\mathcal N_p$ for $p\ge 3$} 
\label{mapNp}
\end{figure}
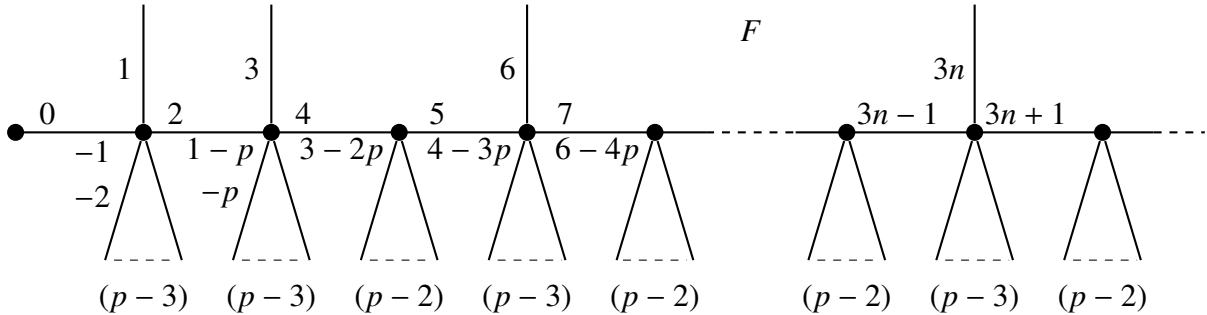

\begin{proof}
 Let $\mathcal N_p$ be the Neumann map of type $(p,2)$ shown in Figure~\ref{mapNp}, where white vertices have been omitted as explained earlier, and the numbers in parentheses indicate how many free edges there are in each `fan'. The pattern repeats periodically to the right. The leftmost directed edge, with the unique $1$-valent vertex as its target, has been chosen as $\alpha$, so that each directed edge $\beta=\alpha z^i\in\Omega$ is labelled with the integer $i$. (To save space in the diagram, only a few significant labels are shown.) For each $n\ge 2$ we can apply Lemma~\ref{notinv} to the directed edges labelled $i=0$ and $j=3n$, fixed by $x$ and $y$ respectively, to show that congruence mod~$(n)$ is not a $\Gamma$-invariant relation. It follows that the representation of $\Gamma$ is primitive, and the stabiliser $M=\Gamma_{\alpha}$, together with its conjugates $\Gamma_{\beta}$ for $\beta\in\Omega$, is a nonparabolic maximal subgroup of $\Gamma$.

In order to produce $2^{\aleph_0}$ conjugacy classes of such subgroups we can modify $\mathcal N_p$ by adding $1$-valent black vertices to an arbitrary subset of the free edges with negative labels (those below the horizontal axis): this adds extra directed edges (fixed by $x$) to $\Omega$, and changes the labelling below the axis, but it has no effect on the labelling above the axis, so the preceding proof still applies.
\end{proof}

In fact, there are many Neumann maps of type $(p,2)$ giving rise to uncountably many conjugacy classes of nonparabolic maximal subgroups of $\Gamma$, after suitable addition or deletion of $1$-valent black vertices. Define an edge of a Neumann map $\mathcal N$ of type $(p,2)$ to be {\sl terminal\/} if it is a free edge or is incident with a vertex of valency $1$. Define two such maps to {\sl have the same shape\/} if they differ only by the addition or deletion of vertices of valency $1$ on terminal edges.

\begin{prop}\label{shape}
Given any shape of Neumann maps of type $(p,2)$ with infinitely many terminal edges, there are uncountably many Neumann maps of that shape for which the corresponding nonparabolic subgroups of $\Gamma=\Delta(p,2,\infty)$ are maximal in $\Gamma$.
\end{prop}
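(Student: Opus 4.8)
The plan is to display the maps of the given shape $\mathcal S$ as a family $\mathcal N_S$, indexed by the subsets $S$ of the (infinite) set $T$ of terminal edges of $\mathcal S$, where $\mathcal N_S$ is obtained from $\mathcal S$ by adding a $1$-valent black vertex to each terminal edge in $S$. Since this operation preserves the property of being a Neumann map of type $(p,2)$, each stabiliser $M_S$ is a Neumann subgroup, hence (by the Proposition in \S\ref{permutations}) a maximal nonparabolic subgroup of $\Gamma$. The task is therefore to fix the states of the terminal edges in a suitable infinite ``control set'' $C\subseteq T$ in such a way that, for every $n\ge 2$, congruence mod~$(n)$ fails to be a $\Gamma$-invariant relation on the directed edges of $\mathcal N_S$ (by Lemma~\ref{notinv}), whatever the states of the remaining, ``free'', terminal edges may be; this makes $\Gamma$ act primitively, so that each $M_S$ is maximal in $\Gamma$, while the free terminal edges supply $2^{\aleph_0}$ maps.

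To set this up I would work along the single bi-infinite face of $\mathcal S$. As $T$ is infinite, at least one of the two ends of the face is approached by infinitely many terminal edges; listing those as $e_1,e_2,e_3,\dots$ in the order in which they occur, I partition this list into consecutive runs $R_2,R_3,R_4,\dots$, where $R_n=\{f^{(n)}_0,\dots,f^{(n)}_{N_n}\}$ consists of $N_n+1\ge n+1$ terminal edges, the runs being separated by single left-over terminal edges $g_2,g_3,\dots$. Every terminal edge outside $\bigcup_n R_n\cup\{g_n:n\ge 2\}$ (in particular those at the other end of the face, if any) is fixed once and for all, say uncapped; the control set is $C=T\setminus\{g_n:n\ge 2\}$, and the $g_n$ are the free terminal edges.

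The crux is a local computation inside each run $R_n$. Cap $f^{(n)}_0$, cap exactly $m$ of the intermediate edges $f^{(n)}_1,\dots,f^{(n)}_{N_n-1}$ for a value $m\in\{0,\dots,n-1\}$ to be chosen, and leave $f^{(n)}_{N_n}$ uncapped. Capping a free edge inserts exactly one new directed edge, immediately after it in the $z$-cycle, and this new directed edge is fixed by $x$; thus $f^{(n)}_0$ now contributes an $x$-fixed directed edge $\delta$, while the uncapped $f^{(n)}_{N_n}$ is a $y$-fixed directed edge $\delta'$. A count of the directed edges lying between $\delta$ and $\delta'$ — the $N_n-1$ original ones of $f^{(n)}_1,\dots,f^{(n)}_{N_n-1}$, the $m$ new ones from the capped intermediates, and a fixed number of directed edges of non-terminal edges of $\mathcal S$ — shows that in the integer labelling $\delta'-\delta=G+m$, where $G\ge N_n\ge n$ is an integer not depending on $S$; the point is that this count involves only terminal edges lying between $\delta$ and $\delta'$, all of which belong to $R_n$, hence to $C$, and so are fixed. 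Choosing $m\equiv-G$ mod~$(n)$ gives $\delta'\equiv\delta$ mod~$(n)$, so Lemma~\ref{notinv} makes congruence mod~$(n)$ non-$\Gamma$-invariant in $\mathcal N_S$, for every subset $S$ of $\{g_n:n\ge 2\}$. As explained before Lemma~\ref{notinv}, $\Gamma$ then acts primitively on the directed edges of $\mathcal N_S$, so $M_S$ is maximal in $\Gamma$. There are $2^{\aleph_0}$ such $S$; non-isomorphic maps among the $\mathcal N_S$ give distinct conjugacy classes of stabilisers, and since any automorphism of $\mathcal N_S$ centralises the single bi-infinite cycle $z$, the group $\mathrm{Aut}(\mathcal N_S)$ embeds in $\mathbb Z$ and is countable, so $2^{\aleph_0}$ of the $\mathcal N_S$ are pairwise non-isomorphic.

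The main obstacle is precisely the robustness demanded in the third paragraph. Because $C$ has to be infinite — one run $R_n$ for each modulus $n$ — it cannot be shielded from the free terminal edges by placing the latter entirely to one side of it, as in the proof of Corollary~\ref{p3}; instead one must ensure that each primitivity-forcing pair $\delta,\delta'$ is bracketed by terminal edges whose states are fixed, and that such a pair can be created at any prescribed residue mod~$(n)$ inside a run of $n+1$ consecutive terminal edges. It is at this last point that the elementary observation ``each capping inserts exactly one directed edge, so the realisable differences $G+m$ run through $n$ consecutive integers'' does the essential work. The remaining ingredients — that $\mathcal N_S$ is genuinely a Neumann map of type $(p,2)$ of shape $\mathcal S$, and the automorphism count converting $2^{\aleph_0}$ maps into $2^{\aleph_0}$ conjugacy classes — are routine.
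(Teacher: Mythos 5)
Your proof is correct, and although it rests on the same two pillars as the paper's argument -- Lemma~\ref{notinv} applied to an $x$-fixed and a $y$-fixed directed edge whose labels agree mod~$(n)$, together with the observation that capping a free edge inserts exactly one directed edge into the $z$-cycle -- it organises the construction in a genuinely different way. The paper takes a base map $\mathcal N_0$ with a single $1$-valent vertex, uses the resulting $x$-fixed point at label $0$ as a global anchor for every $n$, and works rightwards, capping free edges so as to place a $y$-fixed point at a multiple of $n$ for $n=2,3,\dots$ in turn; robustness comes from the one-sided fact that a capping at label $i$ leaves all labels $j\le i$ unchanged, so each achieved $j_n$ survives all later modifications, and the uncountably many binary choices are interleaved with the constraint-satisfying cappings. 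You instead manufacture, for each $n$ separately, a fresh $x$-fixed/$y$-fixed pair $\delta,\delta'$ inside a block $R_n$ of at least $n+1$ consecutive controlled terminal edges, and exploit the two-sided fact that the difference $\delta'-\delta$ depends only on the directed edges lying between them, so it is immune to the states of the designated free edges $g_k$, all of which lie outside every $R_n$. This decoupling makes the independence of the choices and the $2^{\aleph_0}$ count completely transparent, at the price of a heavier setup; the paper's interleaved version is shorter, and the shielding you say is unavailable is in fact recovered there in its one-sided form. Two small points: your inequality $G\ge N_n$ is unnecessary (only the fixedness of $G$ matters), and in passing from $2^{\aleph_0}$ maps to $2^{\aleph_0}$ conjugacy classes the relevant observation is that any \emph{isomorphism} $\mathcal N_S\to\mathcal N_{S'}$, not merely any automorphism, centralises the single infinite cycle of $z$ and is therefore one of countably many shifts, so each isomorphism class contains at most countably many of the $\mathcal N_S$; the conclusion is unaffected.
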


\begin{proof}
Let $\mathcal N_0$ be a Neumann map of the given shape, with just one $1$-valent vertex $v_0$. Let $\alpha$ be the directed edge with target $v_0$, and use $\alpha$ to identify the set $\Omega$ of directed edges of $\mathcal N_0$ with $\mathbb Z$ as above. Since there are infinitely many terminal edges, $\mathcal N_0$ has infinitely many free edges,  so there must be infinitely many with label $i>0$, or infinitely many with label $i<0$, or both; replacing $\mathcal N_0$ with its mirror image if necessary (since the conclusion is invariant under reflection) we may assume the former.

We will now add $1$-valent vertices to positively labelled free edges of $\mathcal N_0$, thus changing labels, to ensure that for each $n\ge 2$ there is a free edge labelled with some multiple of $n$, so that Lemma~\ref{notinv} shows that congruence mod~$(n)$ is not $\Gamma$-invariant. Adding a $1$-valent vertex to a free edge labelled $i>0$ adds a directed edge labelled $i+1$ to $\Omega$, and increases all existing labels $j>i$ by $1$, leaving all labels $j\le i$ unchanged. By doing this to at most one free edge with label $i>0$ we can produce a free edge with label $j_2$ divisible by $2$. Then by doing this to at most two free edges with labels $i>j_2$ we can produce a free edge with label $j_3$ divisible by $3$. Continuing in this way we produce a Neumann map $\mathcal N$, of the same shape as $\mathcal N_0$, with the property that for every $n\ge 2$ there is a free edge labelled with some multiple $j_n$ of $n$. Then Lemma~\ref{notinv}, with $i=0$ fixed by $x$ and $j=j_n$ fixed by $y$, shows that congruence mod~$(n)$ is not $\Gamma$-invariant, so that $\Gamma$ acts primitively on $\Omega$ and the subgroup $M=\Gamma_{\alpha}$ is maximal. Moreover, at each stage of this process, in producing $j_n$, we have a choice of which free edges should receive $1$-valent vertices, so there are $2^{\aleph_0}$ non-isomorphic possibilities for $\mathcal N$, giving $2^{\aleph_0}$ conjugacy classes of maximal subgroups $M$.
\end{proof}

Here it is necessary to include the hypothesis that the shape has infinitely many terminal edges, since there are Neumann maps with only finitely many, or even none, as we shall see in \S\ref{Tretkoffrev}. However, it is easy to see (for instance, by considering finite subgraphs) that every planar Neumann map has infinitely many.


\section{Proof of Theorem~\ref{CpCqnonpara}: the case $q\ge 3$}\label{proofq>2}

Having dealt with the case $q=2$ of Theorem~\ref{CpCqnonpara} in the preceding section, we now deal with the case $q\ge 3$. Since $q\ne 2$ we will use bipartite Neumann maps, with vertices coloured black and white, and take $\Omega$ to be the set of edges, rather than directed edges.

\begin{theo}\label{q3}
For each pair of integers $p\ge 3$ and $q\ge 3$ the triangle group $\Gamma=\Delta(p,q,\infty)$ has uncountably many conjugacy classes of nonparabolic maximal subgroups.
\end{theo}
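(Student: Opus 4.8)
The plan is to imitate the proof of Corollary~\ref{p3}, replacing the map $\mathcal N_p$ of Figure~\ref{mapNp} by a suitable bipartite Neumann map $\mathcal N_{p,q}$ of type $(p,q)$. First I would construct an explicit infinite one-face bipartite planar map in which every black vertex has valency dividing $p$ and every white vertex has valency dividing $q$: the skeleton should be a ``spine'' of edges carrying the boundary walk of the single face, decorated periodically with ``fans'' of free edges (edges incident with a $1$-valent vertex of the appropriate colour) attached at the black and white vertices along the spine. As in the $q=2$ case, the single-face condition is guaranteed by building the map so that $z=(xy)^{-1}$ has one infinite cycle; in map-language one just draws the map with one face and reads off the permutations. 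Choosing a base edge $\alpha$ identifies $\Omega$ with $\mathbb Z$ via $z\colon i\mapsto i+1$, and then $y$ is a Neumann permutation of type $(p,q)$.

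The key step is to arrange the combinatorics of $\mathcal N_{p,q}$ near the spine so that, for every $n\ge 2$, there is a black vertex of valency $1$ fixed by $x$ with label $i$ and a white vertex of valency $1$ fixed by $y$ with label $j$, with $i\equiv j\pmod n$; then Lemma~\ref{notinv} shows congruence mod~$(n)$ is not $\Gamma$-invariant, so (since the only nontrivial translation-invariant congruences on $\mathbb Z$ are the mod~$(n)$ ones) the representation is primitive and the stabiliser $M=\Gamma_\alpha$ is a nonparabolic maximal subgroup. The cleanest way to get this is the same trick as in Corollary~\ref{p3}: fix a single $1$-valent black vertex with label $0$ once and for all, place infinitely many $1$-valent white vertices with positive labels (this is possible for any $p\ge3$, $q\ge3$ because $q\ge3$ leaves room for a pendant white vertex on the far side of the spine), and note that by adjusting only the decorations at negatively-labelled positions one does not disturb the labels $0$ and the positive labels. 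By the same reasoning as in Proposition~\ref{shape} one can in fact prearrange the positive positions so that for every $n$ some $1$-valent white vertex carries a label divisible by $n$.

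To obtain $2^{\aleph_0}$ conjugacy classes I would, exactly as at the end of the proof of Corollary~\ref{p3}, modify $\mathcal N_{p,q}$ by attaching a pendant $1$-valent black vertex (or not) to each member of an arbitrary subset $S$ of the free edges carrying negative labels. Each choice of $S$ changes $\Omega$ and the negative part of the labelling but leaves $0$ and the positive labels fixed, so the primitivity argument above still applies; distinct $S$ give non-isomorphic Neumann maps (they have different cycle structures for $x$), hence inequivalent transitive permutation representations of $\Gamma$, hence distinct conjugacy classes of maximal subgroups $M$. Since there are $2^{\aleph_0}$ subsets $S$, this yields uncountably many conjugacy classes of nonparabolic maximal subgroups, proving the theorem.

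The main obstacle I anticipate is purely in the bookkeeping of the explicit map $\mathcal N_{p,q}$: with $q\ge3$ one must specify the cyclic orders of edges around both the black and the white spine-vertices so that the boundary walk really is a single infinite cycle and so that there is genuine room to hang the required pendant vertices of each colour without forcing an unwanted extra face or violating the valency-divisibility conditions. Writing down one concrete periodic pattern (an analogue of Figure~\ref{mapNp}) and checking the single-face property on it is the one place where care is needed; everything after that is a routine adaptation of the $q=2$ argument. I would present the construction via a figure and a short verification that $z$ has a single infinite cycle, then invoke Lemma~\ref{notinv} and the counting argument verbatim.
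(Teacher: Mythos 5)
Your overall strategy --- build a one-face bipartite map of type $(p,q)$ with a fixed point of $x$ at label $0$ and fixed points of $y$ at positive labels hitting every congruence class, invoke Lemma~\ref{notinv} for primitivity, and then vary only the negatively-labelled part to get $2^{\aleph_0}$ classes --- is exactly the paper's: the map $\mathcal N_{p,q}$ of Figure~\ref{mapNpq} is your ``spine with fans'', and the paper applies Lemma~\ref{notinv} to the edge $0$ (fixed by $x$) and the edge $2n$ (fixed by $y$). However, your final counting step has a genuine flaw. In the bipartite setting there are no free half-edges: your ``free edges'' terminate in $1$-valent vertices, so ``attaching a pendant $1$-valent black vertex'' to such an edge can only mean adjoining a new edge at its $1$-valent white endpoint. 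That raises the valency of this white vertex from $1$ to $2$, and $2$ must divide $q$ for the map to remain of type $(p,q)$ --- equivalently, for $y^q=1$ to survive, since the new vertex creates a $2$-cycle of $y$. For odd $q$ the resulting permutations no longer define a representation of $\Delta(p,q,\infty)$ at all, and the dual objection applies to growing single pendant edges at $1$-valent black vertices when $p$ is odd. So as written your modification only works when $2\mid q$ (or $2\mid p$), not for all $p,q\ge 3$.

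The repair is small and is what the paper does: instead of a single pendant edge, attach a whole fan of $q-1$ new edges, each ending in a new $1$-valent black vertex, at a chosen $1$-valent white vertex below the axis, bringing it to valency exactly $q$ (dually, $p-1$ new edges at a $1$-valent black vertex). Each such insertion is legal for every $p,q\ge 3$, enlarges $\Omega$, and relabels only the negative part of the face cycle, so independent choices over the infinitely many $1$-valent vertices below the axis again give $2^{\aleph_0}$ maps and your argument goes through verbatim. (A minor further point: to conclude that these yield $2^{\aleph_0}$ conjugacy classes it is cleaner to observe that each isomorphism class of maps can account for at most countably many of the constructed labellings, rather than to argue directly that distinct subsets give different cycle structures, which need not hold.)
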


\begin{figure}[h!]
\begin{center}
\begin{tikzpicture}[scale=0.17, inner sep=0.8mm]

\node (0) at (0,0) [shape=circle, fill=black] {};
\node (1) at (10,0) [shape=circle, draw] {};
\node (2) at (20,0) [shape=circle, fill=black] {};
\node (3) at (30,0) [shape=circle, draw] {};
\node (4) at (40,0) [shape=circle, fill=black] {};
\node (5) at (50,0) [shape=circle, draw] {};
\node (6) at (65,0) [shape=circle, draw] {};
\node (7) at (75,0) [shape=circle, fill=black] {};
\node (8) at (85,0) [shape=circle, draw] {};

\node (0) at (0,0) [shape=circle, fill=black] {};
\node (1+) at (10,10) [shape=circle, fill=black] {};
\node (2+) at (20,10) [shape=circle, draw] {};
\node (4+) at (40,10) [shape=circle, draw] {};
\node (7+) at (75,10) [shape=circle, draw] {};

\node (1l) at (7,-10) [shape=circle, fill=black] {};
\node (1r) at (13,-10) [shape=circle, fill=black] {};
\node (2l) at (17,-10) [shape=circle, draw] {};
\node (2r) at (23,-10) [shape=circle, draw] {};
\node (3l) at (27,-10) [shape=circle, fill=black] {};
\node (3r) at (33,-10) [shape=circle, fill=black] {};
\node (4l) at (37,-10) [shape=circle, draw] {};
\node (4r) at (43,-10) [shape=circle, draw] {};
\node (5l) at (47,-10) [shape=circle, fill=black] {};
\node (5r) at (53,-10) [shape=circle, fill=black] {};
\node (6l) at (62,-10) [shape=circle, fill=black] {};
\node (6r) at (68,-10) [shape=circle, fill=black] {};
\node (7l) at (72,-10) [shape=circle, draw] {};
\node (7r) at (78,-10) [shape=circle, draw] {};
\node (8l) at (82,-10) [shape=circle, fill=black] {};
\node (8r) at (88,-10) [shape=circle, fill=black] {};

\draw [thick] (0) to (1) to (3) to (5) to (55,0);
\draw [thick,dashed] (54,0) to (61,0);
\draw [thick] (61,0) to (6) to (8) to (89,0);
\draw [thick,dashed] (89,0) to (93,0);
\draw [thick] (1) to (1+);
\draw [thick] (2) to (2+);
\draw [thick] (4) to (4+);
\draw [thick] (7) to (7+);

\node at (15,-1.8) {$2-q$};
\node at (6,-5) {$-1$};
\node at (5,1.5) {$0$};
\node at (11.5,5) {$1$};
\node at (18.5,5) {$2$};
\node at (25,1.5) {$3$};
\node at (38.5,5) {$4$};
\node at (45,1.5) {$5$};
\node at (73,5) {$2n$};
\node at (80,1.5) {$2n+1$};

\draw [thick] (1l) to (1) to (1r);
\draw [dashed] (1l) to (1r);
\node at (10,-13) {$(q-3)$};

\draw [thick] (2l) to (2) to (2r);
\draw [dashed] (2l) to (2r);
\node at (20,-13) {$(p-3)$};

\draw [thick] (3l) to (3) to (3r);
\draw [dashed] (3l) to (3r);
\node at (30,-13) {$(q-2)$};

\draw [thick] (4l) to (4) to (4r);
\draw [dashed] (4l) to (4r);
\node at (40,-13) {$(p-3)$};

\draw [thick] (5l) to (5) to (5r);
\draw [dashed] (5l) to (5r);
\node at (50,-13) {$(q-2)$};

\draw [thick] (6l) to (6) to (6r);
\draw [dashed] (6l) to (6r);
\node at (65,-13) {$(q-2)$};

\draw [thick] (7l) to (7) to (7r);
\draw [dashed] (7l) to (7r);
\node at (75,-13) {$(p-3)$};

\draw [thick] (8l) to (8) to (8r);
\draw [dashed] (8l) to (8r);
\node at (85,-13) {$(q-2)$};

\node at (57.5,8) {$F$};

\end{tikzpicture}
\end{center}
\caption{The bipartite map $\mathcal N_{p,q}$ for $p, q\ge 3$} 
\label{mapNpq}
\end{figure}

\begin{proof}
Let $\mathcal N_{p,q}$ be the Neumann map of type $(p,q)$ shown in Figure~\ref{mapNpq}, where (as before) the integers in parentheses give the number of edges and $1$-valent vertices in each `fan'. Thus the black vertices all have valency $p$ or $1$, and the white vertices all have valency $q$ or $1$.

This map corresponds to a transitive permutation representation of $\Gamma$ on the set $\Omega$ of its edges. As in the case $q=2$, the single face $F$ corresponds to the single cycle $C$ of $z=(xy)^{-1}$ on $\Omega$; defining $\alpha$ to be the leftmost edge in Figure~\ref{mapNpq}, we can label each edge $\alpha z^i$ with the integer $i$. 

The proof that $\Gamma$ acts primitively on $\Omega$ is identical to that for $q=2$, except that we now permute edges rather than directed edges, and it is the edge labelled $2n$ and fixed by $y$ which shows that $Ey=E$. It follows that the stabiliser $M=\Gamma_{\alpha}$ of $\alpha$ in $\Gamma$ is a maximal subgroup. As before, it is nonparabolic because $z$ has no finite cycles. In order to produce $2^{\aleph_0}$ such subgroups we can modify the map $\mathcal N_{p,q}$ by adding fans of $p-1$ or $q-1$ edges and $1$-valent white or black vertices to an arbitrary set of the $1$-valent black and white vertices below the horizontal axis: the resulting expansion of $\Omega$ and redefining of negative labels have no effect on the preceding proof.
\end{proof}


\section{Structure of maximal subgroups}\label{structure}

The maximal subgroups $M$ constructed so far in this paper are all free products of cyclic groups of order $p\;(=3$ for the modular group) and $q\;(=2$ for the Hecke groups), in bijective correspondence with the fixed points of $x$ and $y$ on $\Omega$. (More generally, by the Kurosh Subgroup Theorem (see~\cite[Ch.~IV, Theorem~1.10]{LS}), {\sl any\/} subgroup of $\Gamma=C_p*C_q$ is a free product of subgroups $C_r$ for $r$ dividing $p$ or $q$ or $r=\infty$. However, by the construction of the various maps we have used, proper divisors $d$ of $p$ and $q$ do not arise as black or white vertex-valencies and hence as cycle-lengths for $x$ or $y$, so proper divisors $r=p/d$ or $q/d$ do not arise.) 

Topologically, the corresponding generators for $M$ can be seen as monodromy generators for the covering of the corresponding map by the universal map of that type on $\mathbb H$, induced by the inclusion of the identity subgroup in $M$.
Algebraically, one can see this free product structure by applying the Reidemeister--Schreier algorithm~\cite[\S II.4]{LS} to obtain a presentation for $M$.
If $M$ is any Neumann subgroup of type $(p,q)$, the elements $Z^i$ of the maximal parabolic subgroup $P$ form a Schreier transversal for $M$ in $\Gamma$. Applying the algorithm to this transversal, and eliminating redundancies, we find that each black vertex of valency $d<p$ yields a generator $Z^iX^dZ^{-i}$ for $M$ (where an incident edge is labelled $i\in{\mathbb Z}$), together with a relation that its $p/d$-th power is the identity; a similar remark applies to the white vertices, giving conjugates of powers of $Y$, and there are no further generators or relations, so we obtain the claimed free product decomposition for $M$.

A more general planar bipartite map of type $(p,q)$, with any number of faces, can be transformed into a coset diagram for $M$ in $\Gamma$ with respect to the generators $X$ and $Y$, with a vertex on each edge of the map, and directed edges showing the actions of $X$ and $Y$. The geodesics in a spanning tree for this graph, from a chosen vertex $\alpha$ to the other vertices, then yield words in $X$ and $Y$ representing a Schreier transversal for $M$ in $\Gamma$. The Reidemeister--Schreier algorithm then gives a free product decomposition as before, except that now any face of finite valency $d$, corresponding to a cycle of $z$ of length $d$, yields an additional infinite cyclic free factor, generated by a conjugate of $Z^d$.

With this idea in mind, one can use or adapt the preceding constructions to produce maximal subgroups with various other types of generating sets. For example, the conjugacy class of subgroups associated with the Neumann map $\mathcal N_p$ in Figure~\ref{mapNp} are each generated by one element of order $p$ and infinitely many of order $2$. Similar arguments show that a map of the same shape but with the fixed points of $x$ and $y$ transposed yields maximal subgroups generated by one element of order $2$ and infinitely many of order $p$. However, there are some obvious restrictions. For instance, one cannot construct maximal subgroups of $\Gamma=\Delta(p,q,\infty)$ generated entirely by elements of the {\sl same\/} finite order $k>1$: such elements are elliptic, and are conjugate to powers of $X':=X^{p/k}$ or $Y':=Y^{q/k}$ as $k$ divides $p$ or $q$ (possibly both), so any subgroup they generate must be contained in the normal closure of $X'$ and $Y'$, which is a proper subgroup of $\Gamma$ for $k>1$.

As an example of what can be achieved, we have the following:

\begin{theo}\label{Hecketorsionfree}
For each odd integer $p\ge 3$ the Hecke group $\Gamma=\Delta(p,2,\infty)$ has uncountably many conjugacy classes of torsion-free maximal subgroups, each freely generated by a countably infinite set of parabolic elements.
\end{theo}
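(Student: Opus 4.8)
The plan is to build, for each odd $p\ge 3$, a planar bipartite map of type $(p,2)$ whose associated Neumann subgroup $M$ is (a) torsion-free, (b) maximal in $\Gamma$, and (c) varies over uncountably many conjugacy classes as we vary a construction parameter. The crucial observation from Section~\ref{structure} is that $M$ is a free product of cyclic groups, one cyclic factor $C_{p/d}$ or $C_{2/d}$ for each black or white vertex of valency $d$; so $M$ is torsion-free precisely when every black vertex has valency $p$ (never a proper divisor, which for $p$ prime means valency $1$ is forbidden, and for general odd $p$ means no vertex of valency $d\mid p$ with $1\le d<p$) and every white vertex has valency $2$ (never $1$). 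Equivalently, working with the simplified map $\mathcal N^{\dagger}$, I need a planar map in which every black vertex has valency exactly $p$ and there are no free edges and no $1$-valent vertices at all — every edge of $\mathcal N^{\dagger}$ is non-free, i.e. carries a white vertex of valency $2$ in $\mathcal N$. The point of taking $p$ odd is that a $p$-valent bipartite-type structure with all faces infinite can be realised in the plane; this is where the parity is used.

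First I would exhibit one such map $\mathcal N_0$ explicitly: take an infinite $p$-valent tree-like ribbon structure — for instance start from the infinite $p$-valent plane tree and thicken each edge, or more concretely string together $p$-valent black vertices along a bi-infinite ``spine'' with pendant $p$-valent black vertices hung off it, always identifying edges in pairs so that no edge is left free. Since the underlying graph is a tree (or at least has no cycles enclosing a bounded region), the resulting map is planar with a single infinite face, hence is a genuine Neumann map of type $(p,2)$; because every black vertex has valency $p$ and every edge is non-free, the corresponding $M_0$ is torsion-free, and by the Reidemeister--Schreier discussion in Section~\ref{structure} each white vertex (valency $2$) contributes a free generator which, being a conjugate of $Z^{2/2}=Z$... — more precisely, since $q=2$ every white vertex has valency exactly $2$, so it contributes no relation and a free generator equal to a conjugate of a power of $Z$, i.e. a parabolic element; thus $M_0$ is free on a countably infinite set of parabolic elements.

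Next, to get maximality I would arrange (or modify $\mathcal N_0$ so) that for every integer $n\ge 2$ the primitivity criterion of Lemma~\ref{notinv} applies. Here is the subtlety: Lemma~\ref{notinv} as stated wants a fixed point $i$ of $x$ and a fixed point $j$ of $y$ with $i\equiv j\pmod n$, but in a torsion-free $\mathcal N$ there are \emph{no} fixed points of $x$ or $y$ at all. So I must instead prove a variant: if $x$ has a cycle of length $p$ setwise fixing a congruence class, that forces (since $p$ is coprime to... no) — better, I would argue directly that a translation-invariant congruence mod $n$ on $\Omega\cong\mathbb Z$ that is also $x$-invariant and $y$-invariant is impossible, by choosing a black vertex whose $p$ incident edges have labels not all congruent mod $n$ (possible as soon as $p>1$ and we control the local picture) together with the fact that $z$ forces consecutive labels into distinct classes unless $n=1$; with $\Gamma=\langle Y,Z\rangle$ this already does it. Concretely, as in Proposition~\ref{shape}, I would insert ``blocks'' of the tree-like structure along the positive axis to push a white vertex (fixed-point-free, but I can still read off the labels of its two edges) — actually the cleanest route: show that some black vertex has two incident edges whose labels differ by something not divisible by $n$, for every $n$, which can be forced by making the label-gaps at black vertices take infinitely many values; this is automatic in the spine construction once the pendant subtrees have growing sizes.

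Finally, for the uncountably many conjugacy classes, I would introduce a binary choice at each of infinitely many stages — e.g. at the $k$-th pendant position along the spine, choose one of two combinatorially non-isomorphic $p$-valent torsion-free finite ``gadgets'' to attach — so that distinct choice-sequences give non-isomorphic maps $\mathcal N$, hence inequivalent transitive permutation representations of $\Gamma$, hence $2^{\aleph_0}$ conjugacy classes of the subgroups $M$; one must check the gadgets can be chosen to preserve properties (a) torsion-freeness and (b) the label-gap condition ensuring maximality, which is routine since both are ``local plus one global'' conditions. I expect the main obstacle to be step (b): replacing Lemma~\ref{notinv} — which is tailored to fixed points — by a usable primitivity criterion for torsion-free maps, and then engineering the map so that criterion genuinely applies for every $n\ge 2$ without accidentally reintroducing torsion or a bounded face. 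Using $p$ odd is essential for the planar realisability of an everywhere-$p$-valent Neumann map, and I would flag precisely where evenness of $p$ would obstruct the construction.
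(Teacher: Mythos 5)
Your proposal founders on a structural misconception about where the parabolic generators come from. In the Reidemeister--Schreier analysis of \S\ref{structure}, a black or white vertex contributes a generator to $M$ only when its valency is a \emph{proper} divisor of $p$ or $q$; a white vertex of valency exactly $2$ contributes nothing (its would-be generator is a conjugate of $Y^2=1$). The infinite cyclic free factors generated by conjugates of powers of $Z$ --- the parabolic generators the theorem requires --- come from \emph{finite faces}, i.e.\ finite cycles of $z$, not from $2$-valent white vertices. Consequently your base map $\mathcal N_0$, an infinite $p$-valent plane tree with every edge non-free and no bounded faces, yields the \emph{trivial} subgroup: it is exactly the universal map of type $(p,2)$, the coset diagram of $M=1$. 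It is not a Neumann map either: on an infinite tree all of whose subtrees are infinite, the face-tracing permutation $z$ has infinitely many infinite cycles, not one (compare the remark at the end of \S\ref{proofq=2} that every planar Neumann map has infinitely many terminal edges). So $\mathcal N_0$ is neither maximal nor freely generated by parabolics, and your stated reason for requiring $p$ odd (planar realisability of a $p$-valent tree) is not the real one; the obstruction for even $p$ is that all parabolic elements lie in the normal closure of $Z$, a proper subgroup of $\Gamma$.

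The fix --- and it is the heart of the paper's construction --- is to abandon the ``single infinite face'' requirement and deliberately introduce bounded faces of valency $1$: one attaches $l=(p-1)/2$ or $l-1$ loops at each vertex of the simplified map so that every vertex still has valency $p$ (this is where $p$ odd is used, via $2l+1=p$), and each loop encloses a $1$-gon, i.e.\ a fixed point of $z$, contributing a free generator conjugate to $Z$. The resulting subgroups are emphatically \emph{not} Neumann subgroups, so your primitivity discussion, which tacitly identifies $\Omega$ with $\mathbb Z$ carrying the translation $z$, also needs repair: one must analyse a $\Gamma$-invariant relation on $\Omega=C\cup Cy$, where $C$ is the one infinite cycle of $z$ and $\Omega\setminus C$ consists of its fixed points. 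The paper does this with the auxiliary element $W=Z^{l+1}Y$, which fixes the ``principal'' directed edges of the flowers, arranged (by inserting bulbs) so that the principal labels meet every residue class modulo every $n$; your proposed replacement for Lemma~\ref{notinv} points in a reasonable direction but is not developed enough to carry this out. Your torsion-freeness criterion and your template for obtaining $2^{\aleph_0}$ classes via independent binary choices are sound, but as it stands the construction does not produce the groups the theorem asserts.
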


\begin{proof}
Define $l:=(p-1)/2$, so that $l\ge 1$. Let $\mathcal N'_p$ be the planar map in Figure~\ref{mapN'p}, where each vertex on or off the horizontal axis is incident with $l-1$ or $l$ loops respectively, so that all vertices have valency $p$. This map therefore represents a transitive permutation representation $\Gamma\to G$ in which the point-stabilisers are freely generated by parabolic elements, corresponding to the loops in the map. In particular, these subgroups are torsion-free. Our aim is to modify this construction in order to produce uncountably many conjugacy classes of such subgroups, all maximal in $\Gamma$.

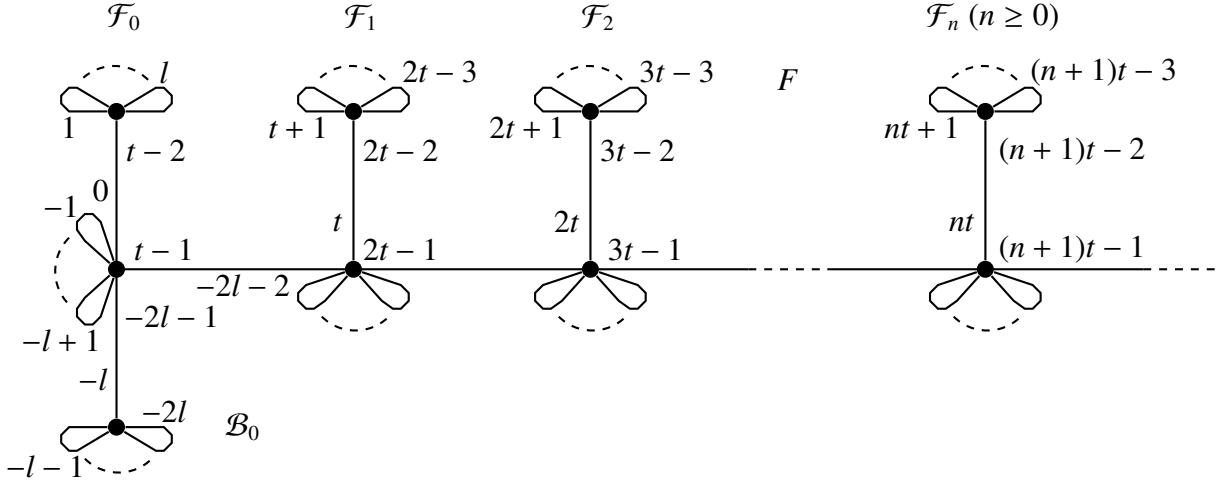
\begin{figure}[h!]
\begin{center}
\begin{tikzpicture}[scale=0.21, inner sep=0.8mm]

\node (2) at (20,0) [shape=circle, fill=black] {};
\node (3) at (35,0) [shape=circle, fill=black] {};
\node (4) at (50,0) [shape=circle, fill=black] {};
\node (7) at (75,0) [shape=circle, fill=black] {};

\node (2+) at (20,10) [shape=circle, fill=black] {};
\node (2-) at (20,-10) [shape=circle, fill=black] {};
\node (3+) at (35,10) [shape=circle, fill=black] {};
\node (4+) at (50,10) [shape=circle, fill=black] {};
\node (7+) at (75,10) [shape=circle, fill=black] {};

\draw [thick] (20,0) to (60,0);

\draw [thick,dashed] (60,0) to (65,0);
\draw [thick] (65,0) to (85,0);
\draw [thick,dashed] (85,0) to (90,0);

\draw [thick] (2-) to (2+);
\draw [thick] (3) to (3+);
\draw [thick] (4) to (4+);
\draw [thick] (7) to (7+);

\draw [thick] (2-) to (17,-10) to (16.5,-10.5) to (16.5,-11) to (17,-11.5) to (17.5,-11.5) to (2-);
\draw [thick] (2-) to (23,-10) to (23.5,-10.5) to (23.5,-11) to (23,-11.5) to (22.5,-11.5) to (2-);
\draw [thick, dashed] (18,-12) arc (225:315:3);

\draw [thick] (2+) to (17,10) to (16.5,10.5) to (16.5,11) to (17,11.5) to (17.5,11.5) to (2+);
\draw [thick] (2+) to (23,10) to (23.5,10.5) to (23.5,11) to (23,11.5) to (22.5,11.5) to (2+);
\draw [thick, dashed] (22,12) arc (45:135:3);

\draw [thick] (3+) to (32,10) to (31.5,10.5) to (31.5,11) to (32,11.5) to (32.5,11.5) to (3+);
\draw [thick] (3+) to (38,10) to (38.5,10.5) to (38.5,11) to (38,11.5) to (37.5,11.5) to (3+);
\draw [thick, dashed] (37,12) arc (45:135:3);

\draw [thick] (4+) to (47,10) to (46.5,10.5) to (46.5,11) to (47,11.5) to (47.5,11.5) to (4+);
\draw [thick] (4+) to (53,10) to (53.5,10.5) to (53.5,11) to (53,11.5) to (52.5,11.5) to (4+);
\draw [thick, dashed] (52,12) arc (45:135:3);

\draw [thick] (7+) to (72,10) to (71.5,10.5) to (71.5,11) to (72,11.5) to (72.5,11.5) to (7+);
\draw [thick] (7+) to (78,10) to (78.5,10.5) to (78.5,11) to (78,11.5) to (77.5,11.5) to (7+);
\draw [thick, dashed] (77,12) arc (45:135:3);

\draw [thick] (2) to (19,-3) to (18.5,-3.5) to (18,-3.5) to (17.5,-3) to (17.5,-2.5) to (18,-1.8) to (2);
\draw [thick] (2) to (19,3) to (18.5,3.5) to (18,3.5) to (17.5,3) to (17.5,2.5) to (18,1.8) to (2);
\draw [thick, dashed] (17,2) arc (135:225:3);

\draw [thick] (3) to (32,-1) to (31.5,-1.5) to (31.5,-2) to (32,-2.5) to (32.5,-2.5) to (33.2,-2) to (3);
\draw [thick] (3) to (38,-1) to (38.5,-1.5) to (38.5,-2) to (38,-2.5) to (37.5,-2.5) to (36.8,-2) to (3);
\draw [thick, dashed] (33,-3) arc (225:315:3);

\draw [thick] (4) to (47,-1) to (46.5,-1.5) to (46.5,-2) to (47,-2.5) to (47.5,-2.5) to (48.2,-2) to (4);
\draw [thick] (4) to (53,-1) to (53.5,-1.5) to (53.5,-2) to (53,-2.5) to (52.5,-2.5) to (51.8,-2) to (4);
\draw [thick, dashed] (48,-3) arc (225:315:3);

\draw [thick] (7) to (72,-1) to (71.5,-1.5) to (71.5,-2) to (72,-2.5) to (72.5,-2.5) to (73.2,-2) to (7);
\draw [thick] (7) to (78,-1) to (78.5,-1.5) to (78.5,-2) to (78,-2.5) to (77.5,-2.5) to (76.8,-2) to (7);
\draw [thick, dashed] (73,-3) arc (225:315:3);



\node at (28,-1) {$-2l-2$};
\node at (23.5,-3) {$-2l-1$};
\node at (23,-9) {$-2l$};
\node at (15.5,-12.5) {$-l-1$};
\node at (18.5,-7) {$-l$};
\node at (16.5,-4.5) {$-l+1$};
\node at (16.5,4) {$-1$};
\node at (19,5) {$0$};
\node at (17,9) {$1$};
\node at (23,12.5) {$l$};
\node at (22.5,7.5) {$t-2$};
\node at (23,1.25) {$t-1$};
\node at (34,3) {$t$};
\node at (31.5,9) {$t+1$};
\node at (40.5,12.5) {$2t-3$};
\node at (38,7.5) {$2t-2$};
\node at (38,1.25) {$2t-1$};
\node at (48.5,3) {$2t$};
\node at (46,9) {$2t+1$};
\node at (55.5,12.5) {$3t-3$};
\node at (53,7.5) {$3t-2$};
\node at (53.5,1.25) {$3t-1$};
\node at (73.5,3) {$nt$};
\node at (71,9) {$nt+1$};
\node at (82.5,12.5) {$(n+1)t-3$};
\node at (80.5,7.5) {$(n+1)t-2$};
\node at (80.5,1.25) {$(n+1)t-1$};

\node at (62.5,12) {$F$};
\node at (20.5,16) {${\mathcal F}_0$};
\node at (35.5,16) {${\mathcal F}_1$};
\node at (50.5,16) {${\mathcal F}_2$};
\node at (28,-10) {${\mathcal B}_0$};
\node at (75.5,16) {${\mathcal F}_n\;(n\ge 0)$};

\end{tikzpicture}
\end{center}
\caption{The $p$-valent map ${\mathcal N}'_p$, with $l=(p-1)/2$ and $t=l+3$} 
\label{mapN'p}
\end{figure}

In Figure~\ref{mapN'p} the label $0$ indicates a directed edge $\alpha$, and each other directed edge $\alpha z^i\;(i\in{\mathbb Z})$ in the cycle $C$ of $z$ containing $\alpha$ is labelled $i$. This cycle corresponds to the unbounded face $F$, while $\Omega\setminus C$ consists of fixed points of $z$, one for each loop. By analogy with gardening, we will refer to the connected subgraphs above the axis as `flowers' $\mathcal F_n$ for $n\ge 0$, and that below it as a `bulb' $\mathcal B_0$. Note that the downward directed edge in the `stem' of $\mathcal F_n$ has the label $tn$, where $t:=l+3=(p+5)/2$; we will call these the {\sl principal directed edge\/} and the {\sl principal label\/} $\lambda(n)$ of $\mathcal F_n$.

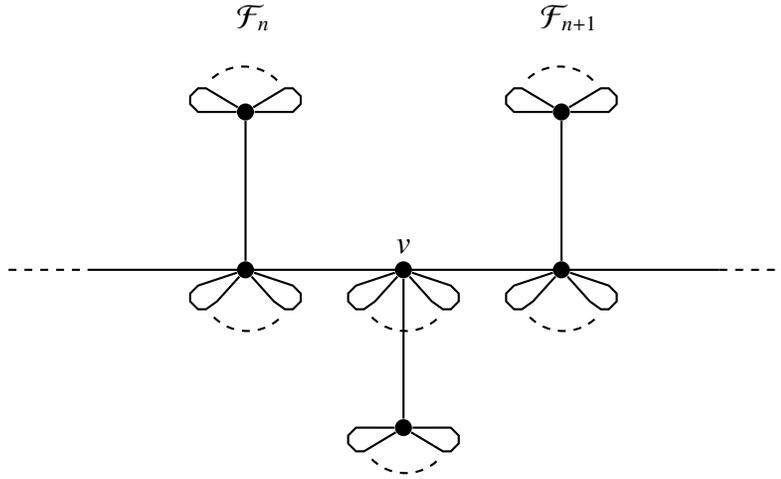
\begin{figure}[h!]
\begin{center}
\begin{tikzpicture}[scale=0.21, inner sep=0.8mm]

\node (5) at (50,0) [shape=circle, fill=black] {};
\node (6) at (60,0) [shape=circle, fill=black] {};
\node (7) at (70,0) [shape=circle, fill=black] {};

\node (6-) at (60,-10) [shape=circle, fill=black] {};
\node (5+) at (50,10) [shape=circle, fill=black] {};
\node (7+) at (70,10) [shape=circle, fill=black] {};

\draw [thick,dashed] (35,0) to (40,0);
\draw [thick] (40,0) to (80,0);
\draw [thick,dashed] (80,0) to (85,0);

\draw [thick] (5) to (5+);
\draw [thick] (6) to (6-);
\draw [thick] (7) to (7+);

\draw [thick] (6-) to (57,-10) to (56.5,-10.5) to (56.5,-11) to (57,-11.5) to (57.5,-11.5) to (6-);
\draw [thick] (6-) to (63,-10) to (63.5,-10.5) to (63.5,-11) to (63,-11.5) to (62.5,-11.5) to (6-);
\draw [thick, dashed] (58,-12) arc (225:315:3);

\draw [thick] (5+) to (47,10) to (46.5,10.5) to (46.5,11) to (47,11.5) to (47.5,11.5) to (5+);
\draw [thick] (5+) to (53,10) to (53.5,10.5) to (53.5,11) to (53,11.5) to (52.5,11.5) to (5+);
\draw [thick, dashed] (52,12) arc (45:135:3);

\draw [thick] (7+) to (67,10) to (66.5,10.5) to (66.5,11) to (67,11.5) to (67.5,11.5) to (7+);
\draw [thick] (7+) to (73,10) to (73.5,10.5) to (73.5,11) to (73,11.5) to (72.5,11.5) to (7+);
\draw [thick, dashed] (72,12) arc (45:135:3);

\draw [thick] (5) to (47,-1) to (46.5,-1.5) to (46.5,-2) to (47,-2.5) to (47.5,-2.5) to (48.2,-2) to (5);
\draw [thick] (5) to (53,-1) to (53.5,-1.5) to (53.5,-2) to (53,-2.5) to (52.5,-2.5) to (51.8,-2) to (5);
\draw [thick, dashed] (48,-3) arc (225:315:3);

\draw [thick] (6) to (57,-1) to (56.5,-1.5) to (56.5,-2) to (57,-2.5) to (57.5,-2.5) to (58.2,-2) to (6);
\draw [thick] (6) to (63,-1) to (63.5,-1.5) to (63.5,-2) to (63,-2.5) to (62.5,-2.5) to (61.8,-2) to (6);
\draw [thick, dashed] (58,-3) arc (225:315:3);

\draw [thick] (7) to (67,-1) to (66.5,-1.5) to (66.5,-2) to (67,-2.5) to (67.5,-2.5) to (68.2,-2) to (7);
\draw [thick] (7) to (73,-1) to (73.5,-1.5) to (73.5,-2) to (73,-2.5) to (72.5,-2.5) to (71.8,-2) to (7);
\draw [thick, dashed] (68,-3) arc (225:315:3);

\node at (50.5,16) {${\mathcal F}_n$};
\node at (60,1.5) {$v$};
\node at (70.5,16) {${\mathcal F}_{n+1}$};

\end{tikzpicture}
\end{center}
\caption{A bulb inserted between two flowers} 
\label{bulb}
\end{figure}

Now suppose that, as shown in Figure~\ref{bulb}, we modify $\mathcal N'_p$ by inserting an additional bulb (copy of $\mathcal B_0$) at a new vertex $v$ on the horizontal axis between adjacent flowers $\mathcal F_n$ and $\mathcal F_{n+1}$, including $l-1$ loops at $v$, below the axis, to ensure that $v$ has valency $p$; then the labelling is changed, and in particular all labels in flowers $\mathcal F_m$ for $m>n$, including their principal labels, are increased by $1$ because of the extra edge on the horizontal axis. By inserting various numbers of bulbs between flowers, we can arrange that, for each integer $d\ge 1$, every congruence class in $\mathbb Z_d$ is represented by the principal label $\lambda(n)$ of some flower $\mathcal F_n$. For example, we could deal with the gaps between successive flowers from left to right, at each stage inserting enough bulbs so that, in increasing order of $d$, all classes in $\mathbb Z_d$ have been represented. Moreover, by using additional redundant bulbs at arbitrary stages, we can do this in uncountably many different ways. As a result, we obtain uncountably many non-isomorphic maps giving transitive permutation representations of $\Gamma$, and hence uncountably many conjugacy classes of point-stabilisers in $\Gamma$, all with the required generating sets.

It remains to prove that these subgroups are all maximal, or equivalently, that the permutation representations are primitive. Any $\Gamma$-invariant equivalence relation $\sim$ on $\Omega$ restricts to the labels on $C$ as congruence mod~$(n)$ for some $n\in{\mathbb N}\cup\{\infty\}$, where we allow $n=1$ and $\infty$ to represent the universal and identity relations on $\mathbb Z$. If we define $W:=Z^{l+1}Y\in\Gamma$, inducing $w:=z^{l+1}y\in G$ on $\Omega$, then by inspection $\beta w=\beta$ whenever $\beta$ is the principal directed edge of a flower, so if $n\ne\infty$ then by our choice of gaps, $w$ preserves every equivalence class appearing in $C$. However, inspection of Figure~\ref{mapN'p} shows that $w$, in its induced action on integer labels, sends $1$ to $-2l-2$, and $-l-1$ to $l+1$, so $n$ divides both $2l+3$ and $2l+2$, giving $n=1$. Thus all elements of $C$ are equivalent under $\sim$, and hence so are all elements of $Cy$. But $\Omega=C\cup Cy$ with $\alpha\in C\cap Cy$, so $\sim$ is the universal relation.

Thus we may assume that $n=\infty$, so all elements of $C$ are in different classes, and hence the same applies to $Cy$. Since $\alpha\in C\cap Cy$ we have $E\cap C=E\cap Cy=\{\alpha\}$. Since $\Omega=C\cup Cy$ it follows that $E=\{\alpha\}$, so all equivalence classes are singletons and $\sim$ is the identity relation.
\end{proof}

The condition that $p$ should be odd is essential here: the parabolic elements are all contained in the normal closure of $Z$, which is a proper subgroup of $\Gamma$ if $p$ is even. This suggests the following:

\begin{conj}
Theorem~\ref{Hecketorsionfree} extends to the groups $\Delta(p,q,\infty)$ for all mutually coprime pairs of integers $p, q>1$, that is, each of these groups has uncountably many conjugacy classes of torsion-free maximal subgroups, each freely generated by a countably infinite set of parabolic elements.
\end{conj}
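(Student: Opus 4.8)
The plan is to imitate the proof of Theorem~\ref{Hecketorsionfree}, replacing the $p$-valent map $\mathcal N'_p$ by a suitable planar \emph{bipartite} map $\mathcal N_0$ of type $(p,q)$. Note first that the coprimality hypothesis is exactly what is needed for the problem to make sense: killing $Z$ in $\Gamma$ forces $Y=X^{-1}$ and $X^{\gcd(p,q)}=1$, so when $\gcd(p,q)=1$ the normal closure of $Z$ is all of $\Gamma$, removing the obstruction noted after Theorem~\ref{Hecketorsionfree} that blocks the even Hecke case. What the construction requires is an infinite, connected, locally finite planar bipartite map in which every black vertex has valency exactly $p$, every white vertex has valency exactly $q$, there is a single unbounded face, and there are infinitely many bounded faces. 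Given such a map, the associated transitive action of $\Gamma$ on the edge-set $\Omega$ has point-stabiliser $M$ that is torsion-free: since no vertex is deficient, $x$ has all cycles of length $p$ and $y$ all cycles of length $q$, so no nontrivial power of $x$ or of $y$ fixes a point of $\Omega$, and hence $M$ contains no conjugate of a nontrivial power of $X$ or of $Y$; by the Kurosh Subgroup Theorem $M$ is therefore free. Moreover, by the Reidemeister--Schreier analysis of \S\ref{structure}, the free generators of $M$ are in bijection with the bounded faces, a face of valency $d$ contributing one conjugate of $Z^d$, with no contribution from the vertices or from the single infinite face; these generators are parabolic, and there are countably infinitely many of them. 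Thus such a map produces exactly the kind of subgroup demanded, and it remains to construct one for every coprime pair $p,q>1$ while securing maximality and $2^{\aleph_0}$ isomorphism classes.

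For the base map, the idea is to make $\mathcal N_0$ ``periodic'', built by gluing copies of one finite planar bipartite tile $T$ in a chain along a horizontal axis (with a terminal tile closing off the left end, as the bulb $\mathcal B_0$ does for $\mathcal N'_p$), so that $\mathcal N_0$ acquires the flower/bulb architecture of $\mathcal N'_p$: one ``flower'' per period, each carrying a distinguished \emph{principal} directed edge whose \emph{principal label} $\lambda(n)$ --- in the labelling of $\Omega$ by the infinite $z$-cycle $C$ through a chosen edge $\alpha$, as in \S\ref{structure} --- increases by a fixed positive step from one period to the next. Since $\gcd(p,q)=1$, the handshake identities force such a tile to have $kq$ black vertices and $kp$ white vertices for some $k\ge1$, all of full valency; one needs it planar, with two ``ports'' lying on a common face along which successive copies can be glued so as to preserve planarity, connectedness and a single unbounded face, the bounded faces of each tile surviving to furnish parabolic generators.

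Granting such a base map, maximality and the uncountable count go through essentially as in Theorem~\ref{Hecketorsionfree}. Any $\Gamma$-invariant equivalence relation $\sim$ on $\Omega$ restricts on the labels of $C$ to congruence mod~$(n)$ for some $n\in{\mathbb N}\cup\{\infty\}$. First, by inserting between successive flowers extra copies of $T$ (or of a shorter padding tile, again of full valency and adding only bounded faces), one arranges that every residue class modulo every $d\ge1$ is attained by some principal label $\lambda(n)$: since the $\lambda(n)$ grow by a fixed step, one resolves the moduli $d=2,3,4,\dots$ in turn. Next one exhibits a fixed element $W=Z^aY$ (or a similar short word), whose induced permutation fixes every principal directed edge but moves two particular labels by coprime amounts; if $n\ne\infty$ then $n$ divides both, so $n=1$ and $\sim$ is universal, while if $n=\infty$ then $E\cap C=\{\alpha\}$, and, arguing from the fact that $\Omega$ is covered by $C$ together with a bounded number of its $y$-translates meeting $C$ in $\alpha$, every class is a singleton and $\sim$ is the identity. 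Hence $\Gamma$ acts primitively and $M$ is maximal. Finally, at each step of the insertion one may throw in arbitrarily many redundant tiles, producing $2^{\aleph_0}$ pairwise non-isomorphic base maps, hence $2^{\aleph_0}$ conjugacy classes of maximal subgroups, each torsion-free and freely generated by a countably infinite set of parabolic elements.

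The main obstacle is the existence and explicit design of the tile $T$: for arbitrary coprime $p,q$ one must produce a finite planar bipartite multigraph, biregular of degrees $(p,q)$, carrying two compatible gluing ports on a common face. Planarity is the sticking point --- the complete bipartite graph $K_{q,p}$, which has exactly the right degrees, is non-planar once $p,q\ge3$, so genuine multi-edges and a careful layout are unavoidable, and the differing parities that can occur (one of $p,q$ even, the other odd) obstruct the naive ``chain of bigons'' designs that suffice when $q=2$. Finding such a tile uniformly in $p$ and $q$ is the heart of the matter, and is presumably why the statement is only conjectured; once it is available, checking that the primitivity witness $W$ and the linear growth of the principal labels survive the all-full-valency requirement, and that $C$ and a bounded number of its $y$-translates cover $\Omega$, should be routine.
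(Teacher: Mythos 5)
The statement you are addressing is left as a \emph{conjecture} in the paper: no proof is given, and the remark immediately following it already sketches exactly your strategy, saying that a proof ``along the lines developed in this paper would require the construction of a map similar to the map $\mathcal N_{p,q}$ used to prove Theorem~\ref{q3}, but with the black and white vertices all of valency $p$ and $q$ respectively, and none of valency $1$.'' Your proposal is a fleshed-out version of that remark: the observations about why coprimality is necessary, about the Kurosh/Reidemeister--Schreier mechanism converting bounded faces into parabolic free generators, and about redeploying the flower-and-bulb primitivity argument of Theorem~\ref{Hecketorsionfree} are all sound and consistent with the paper.

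The genuine gap is the one you name yourself: the tile $T$ --- equivalently, an infinite planar bipartite map with every black vertex of valency exactly $p$, every white vertex of valency exactly $q$, a single unbounded face and infinitely many bounded faces, equipped with gluing ports, principal directed edges whose labels grow by a fixed step, and a short word $W$ that fixes all principal edges while moving two labels by coprime amounts --- is never constructed. Everything downstream is conditional on properties of this unbuilt object, and those properties are where the difficulty sits. In particular, the endgame of the proof of Theorem~\ref{Hecketorsionfree} uses that $\Omega=C\cup Cy$ with $\alpha\in C\cap Cy$, which for $q=2$ comes for free from the loops; for $q\ge 3$ the analogous requirements (every white vertex meeting the unbounded face so that $\Omega=\bigcup_{i}Cy^{i}$, and the full $y$-orbit of $\alpha$ lying in $C$ so that the $n=\infty$ case forces singleton classes rather than merely classes of size at most $q$) are nontrivial design constraints that must be engineered into $T$ alongside biregularity and planarity, and may conflict with them. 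So what you have is a plausible programme that mirrors the author's own suggested route, not a proof; on your account, as in the paper, the statement remains open.
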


As in the case $q=2$ above, the coprimality condition is necessary here; whether it is also sufficient is not clear. A proof along the lines developed in this paper would require the construction of a map similar to the map $\mathcal N_{p,q}$ used to prove Theorem~\ref{q3}, but with the black and white vertices all of valency $p$ and $q$ respectively, and none of valency $1$.


\section{Neumann's construction revisited}\label{Neumannrev}

Neumann's construction of uncountably many Neumann permutations in~\cite{Neu33}, summarised by Magnus in~\cite[\S III.4]{Mag74}, is rather complicated, but it may be useful to restate it in terms of maps.

Neumann starts with an arbitrary sequence $h=(h_l)_{l\ge 1}$ of terms $h_l=0$ or $1$, and defines a sequence $(g_l)_{l\ge 1}$ by
\[g_1=2,\quad g_l=6l-5\sum_{i=1}^{l-1}h_i-4\quad(l\ge 2).\]
Thus $g_{l+1}-g_l=6$ or $0$ as $h_l=0$ or $1$, so every integer $g\ge 2$ can be expressed uniquely in one of the forms
\[g=g_l,\quad{\rm or}\quad g=g_l+\sigma \quad{\rm where}\quad\sigma\in\{1,2,3,4,5\}\quad{\rm and}\quad h_l=0.\]
Neumann then uses this to define a permutation $\beta$ of $\mathbb Z$, corresponding to our $x^{-1}$, and checks that it satisfies the conditions equivalent to $y:=(zx)^{-1}$ being what we have called a Neumann permutation of type $(3,2)$.

In order to define his permutation $\beta$, Neumann uses a long list of equations~\cite[(68)]{Neu33}, summarised later by Magnus in~\cite[Table~3.1]{Mag74} where his $f$ corresponds to our $y$. Instead we will define our permutation $y$ (and hence also $x$ and $z$) using the Neumann map $\mathcal N_h$ in Figure~\ref{Neumannmap}. This is constructed as follows. There are two free edges at the leftmost vertex, corresponding to $y$ fixing $0$ and $-1$, while the third edge at that vertex shows that $y$ transposes $1$ and $-2$. As we move to the right along the horizontal axis, $\mathcal N_h$ is built up from a sequence $(B_l)_{l\ge 1}$ of adjacent blocks $B_l$, each of which is of one of the two following types, depending on the value of $h_l$:
\begin{itemize}
\item if $h_l=0$ then $B_l$ consists of three free edges above the horizontal axis, attached to successive vertices on the axis and corresponding to fixed points $g_l$, $g_l+2$ and $g_l+4$ for $y$, together with edges along the axis corresponding to transpositions $(g_l+1,-3l)$, $(g_l+3,-3l-1)$ and $(g_l+5,-3l-2)$ in $y$;
\item if $h_l=1$ then $B_l$ consists of a single edge below the horizontal axis, connecting a vertex on the axis with a vertex below it, corresponding to transpositions $(-3l,-3l-1)$ and $(g_l,-3l-2)$ in $y$, with $-3l$ fixed by $x$.
\end{itemize}

\begin{figure}[h!]
\begin{center}
\begin{tikzpicture}[scale=0.18, inner sep=0.8mm]

\node (0) at (5,0) [shape=circle, fill=black] {};
\node (3) at (30,0) [shape=circle, fill=black] {};
\node (4) at (40,0) [shape=circle, fill=black] {};
\node (5) at (50,0) [shape=circle, fill=black] {};
\node (7) at (75,0) [shape=circle, fill=black] {};
\node (7-) at (75,-10) [shape=circle, fill=black] {};

\draw [thick] (5,0) to (15,0);
\draw [thick,dashed] (18,0) to (22,0);
\draw [thick] (25,0) to (60,0);
\draw [thick,dashed] (63,0) to (67,0);
\draw [thick] (70,0) to (82,0);
\draw [thick,dashed] (85,0) to (89,0);

\draw [thick] (5,-10) to (5,10);
\draw [thick] (3) to (30,10);
\draw [thick] (4) to (40,10);
\draw [thick] (5) to (50,10);
\draw [thick] (7) to (7-);

\node at (12,-1.5) {$-2$};
\node at (7,-5) {$-1$};
\node at (3.5,5) {$0$};
\node at (7,1.5) {$1$};

\node at (28.5,5) {$g_l$};
\node at (34,1.5) {$g_l+1$};
\node at (36.5,5) {$g_l+2$};
\node at (44,1.5) {$g_l+3$};
\node at (46.5,5) {$g_l+4$};
\node at (54,1.5) {$g_l+5$};
\node at (25.5,-1.5) {$-3l+1$};
\node at (36.5,-1.5) {$-3l$};
\node at (45.5,-1.5) {$-3l-1$};
\node at (55.5,-1.5) {$-3l-2$};

\node at (78,1.5) {$g_l$};
\node at (70.5,-1.5) {$-3l+1$};
\node at (72.5,-7) {$-3l$};
\node at (79,-4) {$-3l-1$};
\node at (82,-1.5) {$-3l-2$};

\node at (40,-7) {$B_l\;(h_l=0)$};
\node at (75,7) {$B_l\;(h_l=1)$};

\end{tikzpicture}
\end{center}
\caption{The Neumann map $\mathcal N_h$, with blocks $B_l$ for $h_l=0$ and $1$} 
\label{Neumannmap}
\end{figure}
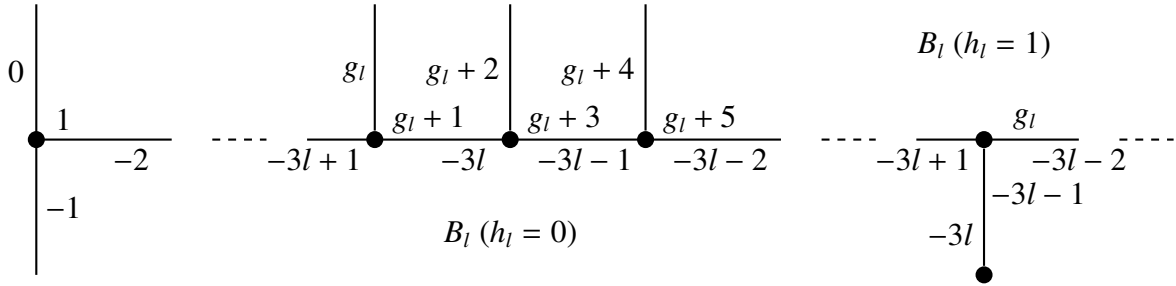

The meaning of $g_l$ should now be clear from Figure~\ref{Neumannmap}: it is the lowest positive label appearing in the $l$-th block $B_l$, corresponding to the term $h_l$ in the chosen sequence $h=(h_l)$. The $2^{\aleph_0}$ possible sequences $h$ give mutually non-isomorphic maps $\mathcal N_h$, and hence give  $2^{\aleph_0}$ conjugacy classes of Neumann subgroups $M_h$ of the modular group $\Gamma$. They are maximal nonparabolic subgroups, each a free product of cyclic groups of order $3$ and $2$ corresponding to the fixed points of $x$ and $y$. In fact, in all cases except one this construction yields a conjugacy class of maximal subgroups of $\Gamma$:

\begin{theo}
The Neumann subgroup $M_h$ is a maximal subgroup of the modular group $\Gamma$ if and only if $h$ is not the constant sequence $(0)$ given by $h_l=0$ for all $l$.
\end{theo}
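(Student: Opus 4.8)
The plan is to apply the criterion developed in Section~\ref{permutations}: $M_h$ is maximal in $\Gamma$ precisely when $\Gamma$ acts primitively on $\Omega=\mathbb Z$, where $Z,Y,X$ act respectively as $z\colon i\mapsto i+1$, $y$ and $x=(yz)^{-1}$; and, since $\Gamma=\langle Y,Z\rangle$ and the only proper nontrivial $z$-invariant equivalence relations on $\mathbb Z$ are congruences modulo $n$ with $n\ge2$, this holds precisely when no such congruence is preserved by $y$. I shall use the following features of $\mathcal N_h$, read off directly from Figure~\ref{Neumannmap}: $y$ always fixes $0$ and $-1$ and transposes $1$ and $-2$; if $h_l=0$ then $y$ fixes $g_l,g_l+2,g_l+4$ and transposes $(g_l+1,-3l)$, $(g_l+3,-3l-1)$ and $(g_l+5,-3l-2)$; if $h_l=1$ then $y$ transposes $(-3l,-3l-1)$ and $(g_l,-3l-2)$ while $x$ fixes $-3l$; and in every case $x$ contains the $3$-cycle $(0,-1,1)$.

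For the ``only if'' direction, suppose $h=(0)$. Then $g_l=6l-4$ for every $l\ge1$, so (taking the leftmost-vertex data as the block ``$l=0$'') $y$ fixes $0$, $-1$ and every positive even integer and transposes exactly the pairs $\{6l-3,-3l\}$ and $\{6l-1,-3l-1\}$ for $l\ge1$ and $\{6l+1,-3l-2\}$ for $l\ge0$. In each of these pairs the two entries are congruent modulo $3$, their difference being $9l-3$, $9l$ or $9l+3$; and each fixed point trivially stays in its own residue class. Hence $y$ preserves congruence modulo $3$, a proper nontrivial $\Gamma$-invariant equivalence relation (equivalently, $M_{(0)}$ lies in the index-$3$ normal closure of $Y$), so $\Gamma$ acts imprimitively and $M_{(0)}$ is not maximal.

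For the ``if'' direction, suppose $h\ne(0)$ and fix $m$ with $h_m=1$; I must show that for every $n\ge2$, congruence modulo $n$ is not preserved by $y$. The cases $n\in\{2,3\}$ are immediate: if $n=2$ then $0\equiv-2\pmod2$, yet $0y=0$ and $(-2)y=1$ are not congruent modulo $2$; and if $n=3$ then $x$ fixes $-3m\equiv0$ and $y$ fixes $0\equiv0$ modulo $3$, so Lemma~\ref{notinv} applies. The substantive case is $n\ge4$, which I would handle by cases on the structure of $h$ — according to whether $\{l:h_l=1\}$ is finite or infinite, and to the value of $\gcd(n,6)$ — in each case either producing, via Lemma~\ref{notinv}, a fixed point $-3l$ of $x$ (with $h_l=1$) congruent modulo $n$ to one of the fixed points $0,-1,g_l,g_l+2,g_l+4$ of $y$, or else directly exhibiting a pair $i\equiv j\pmod n$ with $iy\not\equiv jy\pmod n$ — typically by taking a large positive multiple $a$ of $n$ that $y$ moves (some $g_l$ with $h_l=1$, or some $g_l+1,g_l+3,g_l+5$ with $h_l=0$), so that comparing $a$ with the fixed point $0$ shows $\bar y(\bar 0)=\overline{ay}\ne\bar 0$ in $\mathbb Z/n$. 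It then follows that $\Gamma$ acts primitively and $M_h$ is maximal.

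I expect the $n\ge4$ analysis to be the main obstacle: one must check that for \emph{every} sequence $h\ne(0)$ and every $n\ge4$ one of these obstructions to $y$-invariance really occurs. When $h$ has long runs of $0$'s the integers moved by $y$ meet many residue classes modulo $n$ and the argument is easy; the delicate configurations are those in which $h$ is eventually constant, where the moved integers are confined to few classes and one must exploit the fixed points $g_l+2,g_l+4$ of $y$ and the transpositions $(g_l,-3l-2)$ coming from the then-rare blocks of the opposite type. The ``only if'' direction and the cases $n\le3$ are short; essentially all the work lies in the $n\ge4$ analysis.
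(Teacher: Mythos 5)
Your reduction of maximality to primitivity, and of primitivity to the non-existence of an integer $n\ge 2$ for which $y$ preserves congruence mod~$(n)$, is exactly the right framework and matches \S\ref{permutations}. Your ``only if'' direction is correct and complete: with $h=(0)$ you get $g_l=6l-4$, and your verification that every transposed pair of $y$ has difference $9l-3$, $9l$ or $9l+3$ shows $y$ preserves congruence mod~$(3)$, which is the paper's observation that $M_{(0)}$ lies in the index-$3$ normal closure of $Y$. The cases $n=2,3$ of the converse are also fine.

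However, the ``if'' direction has a genuine gap exactly where you flag it: for $n\ge 4$ you describe a case division (finitely versus infinitely many $1$'s in $h$, the value of $\gcd(n,6)$) and a hoped-for supply of congruent fixed points or moved multiples of $n$, but you do not carry it out, and it is not routine. For instance, if $h_l=1$ precisely when $l\equiv 1 \bmod n$, the $x$-fixed points $-3l$ all lie in a single residue class mod~$(3n)$, so Lemma~\ref{notinv} applied only to the pairs $(-3l,0)$ and $(-3l,-1)$ fails for every $n\ge 4$, and one is forced into the ``delicate configurations'' you mention, tracking the values $g_l \bmod n$ as $\sum_{i<l}h_i$ drifts. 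Until that analysis is done for \emph{every} $h\ne(0)$ and every $n\ge 4$, the theorem is not proved. The paper avoids this entirely with a uniform structural argument: if congruence mod~$(n)$ were $\Gamma$-invariant, the label-reduction $\theta:\mathcal N_h\to\overline{\mathcal N}_h$ would (after a short check that no edge or nontrivial vertex cycle collapses, using the existence of a fixed point of $X$ when $h\ne(0)$) be an unbranched covering of graphs; but then the leftmost vertex, being the \emph{unique} vertex of $\mathcal N_h$ incident with two free edges, would have to have infinitely many isomorphic preimages, a contradiction. If you want to keep your residue-class approach you must supply the missing $n\ge 4$ analysis; otherwise I would recommend replacing it with a covering argument of this kind, which disposes of all $n$ at once.
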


\begin{proof} Suppose that $h\ne(0)$, so that $X$ has a fixed point. If $M_h$ is not maximal, then $\Gamma$ acts imprimitively on $\mathbb Z$, preserving congruence mod~$(n)$ for some integer $n\ge 2$. Let $\theta:\mathcal N_h\to\overline{\mathcal N}_h$ be the projection onto the corresponding quotient map, reducing labels mod~$(n)$. Since $X$ and $Y$ have fixed points in $\mathcal N_h$, they also have fixed points in $\overline{\mathcal N}_h$. A case-by-case argument shows that $\theta$ sends non-trivial cycles of $X$ and $Y$ in $\mathcal N_h$ to non-trivial cycles in $\overline{\mathcal N}_h$, so it induces an unbranched covering of the embedded graphs. For example, let $e$ be an edge of $\mathcal N_h$ on the horizontal axis, separating two free edges (copies of $B_l$ where $h_l=0$); let $i$ and $j=iY$ be the positive and negative labels of the directed edges in $e$, and suppose that $i\equiv j$ mod~$(n)$. Now $Y$ fixes $iX$ and $jX^{-1}$, so it fixes the congruence classes $[i]X$ and $[j]X^{-1}=[i]X^{-1}$ as well as $[i]$; these three directed edges thus form an orbit of $\Gamma$, and therefore give all the directed edges of $\overline{\mathcal N}_h$. However, there has to be a fixed point of $X$ in $\overline{\mathcal N}_h$, so they are all equal, and $\overline{\mathcal N}_h$ is trivial, a contradiction. Thus $i\not\equiv j$, so $e$ is mapped isomorphically into $\overline{\mathcal N}_h$. The other edges are dealt with similarly, as are the trivalent vertices. Hence $\theta$ induces an unbranched covering of graphs, so in particular the subgraph consisting of the leftmost vertex $v$ of $\mathcal N_h$ and its incident edges is mapped isomorphically into $\overline{\mathcal N}_h$, and its image must lift to more than one (in fact infinitely many) isomorphic copies of it in $\mathcal N_h$; however, $v$ is the only vertex in $\mathcal N_h$ incident with two free edges, a contradiction. Thus $M_h$ is maximal if $h\ne(0)$.

This argument fails if $h=(0)$ since then $X$ has no fixed points in $\mathcal N_h$, and one can form a quotient $\overline{\mathcal N}_h$, with one vertex and three free edges, by reducing labels mod $(3)$. In this case $M_h$ is a free product of infinitely many copies of $C_2$, each corresponding to a fixed point of $Y$; it is not maximal since it is contained in the normal closure of $Y$, a subgroup of index $3$ in $\Gamma$. (This is, in fact, the first example presented by Neumann in~\cite[\S1]{Neu33}: see the last comment in his \S15.)
\end{proof}

\section{Tretkoff's construction revisited}\label{Tretkoffrev}

In~\cite{Mag73} Magnus conjectured that every maximal nonparabolic subgroup of the modular group $\Gamma$ arises as a subgroup $M_h$ from Neumann's construction in~\cite{Neu33}; this was later phrased as a question rather than a conjecture in~\cite[\S III.4]{Mag74}. In~\cite{Tre}, C.~Tretkoff disproved the conjecture by finding examples of Neumann subgroups which do not arise in this way. In fact, it is clear from the discussion in the preceding section that the subgroups $M_h$ form a very small subset of the set of all Neumann subgroups: one can construct examples of the latter by starting with the semi-infinite path along the horizontal axis in Figure~\ref{Neumannmap}, and then attaching finite rooted binary plane trees, possibly with free ends, to the vertices, two at the leftmost vertex and one at each of the other vertices, on either side of the axis. There are no restrictions, as there are for Neumann's maps $\mathcal N_h$, that the attached trees should each have just one edge, that the free and non-free edges should be respectively above and below the axis, or that the free edges should come in blocks of three.

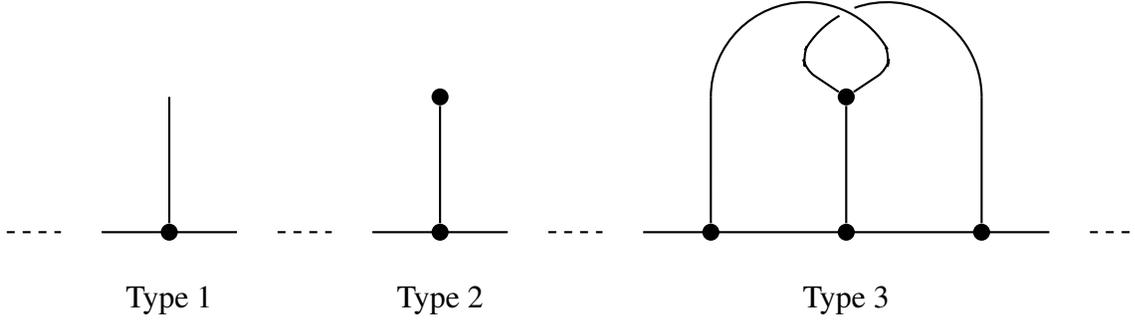
\begin{figure}[h!]
\begin{center}
\begin{tikzpicture}[scale=0.18, inner sep=0.8mm]

\node (0) at (0,0) [shape=circle, fill=black] {};
\node (2) at (20,0) [shape=circle, fill=black] {};
\node (2+) at (20,10) [shape=circle, fill=black] {};
\node (4) at (40,0) [shape=circle, fill=black] {};
\node (5) at (50,0) [shape=circle, fill=black] {};
\node (5+) at (50,10) [shape=circle, fill=black] {};
\node (6) at (60,0) [shape=circle, fill=black] {};

\draw [thick,dashed] (-12,0) to (-8,0);
\draw [thick] (-5,0) to (5,0);
\draw [thick,dashed] (8,0) to (12,0);
\draw [thick] (15,0) to (25,0);
\draw [thick,dashed] (28,0) to (32,0);
\draw [thick] (35,0) to (65,0);
\draw [thick,dashed] (68,0) to (72,0);

\draw [thick] (0) to (0,10);
\draw [thick] (2) to (2+);
\draw [thick] (4) to (40,10);
\draw [thick] (5) to (5+);
\draw [thick] (6) to (60,10);

\draw [thick] (60,10) arc (0:110:7);
\draw [thick] (49.5,16) arc (120:150:7);
\draw [thick] (40,10) arc (180:30:7);

\draw [thick, rounded corners] (53,13.5) to (53.2,13) to (53,12) to (5+);
\draw [thick, rounded corners] (47,13.5) to (46.8,13) to (47,12) to (5+);

\node at (0,-5) {Type 1};
\node at (20,-5) {Type 2};
\node at (50,-5) {Type 3};

\end{tikzpicture}
\end{center}
\caption{Tretkoff's construction of Neumann maps, with blocks of type $1$, $2$ and $3$} 
\label{Tretkoff}
\end{figure}

Like Neumann and Magnus, Tretkoff defines her subgroups in terms of permutations, but again it is instructive to reinterpret them in terms of maps. Any subgroup of $\Gamma=C_2*C_3$ is a free product of subgroups $C_r$ for $r=2, 3$ or $\infty$; as we have seen, Neumann's subgroups $M_h$ have only $C_2$ and $C_3$ as free factors, whereas Tretkoff's can have factors of all three types. As in Neumann's case, the corresponding maps are constructed from a sequence of blocks. Her main ingredients are of three types, shown in Figure~\ref{Tretkoff}. The first two, corresponding to her permutation patterns 1 and 2 and contributing free factors $C_2$ and $C_3$ to $M$, are obvious, but the third, corresponding to her pattern~3, requires some explanation: a bridge, or handle, is added to the surface to avoid the apparent crossing of edges. One can easily check that this results in a Neumann map (which is now nonplanar), and that each block of this type contributes a free factor $F_2=C_{\infty}*C_{\infty}$ to $M$. Different sequences of blocks of these three types yield $3^{\aleph_0}=2^{\aleph_0}$ conjugacy classes of maximal nonparabolic subgroups. In fact, by careful arrangement of the blocks one can use Lemma~\ref{notinv}, as in the proof of Corollary~\ref{p3}, to obtain $2^{\aleph_0}$ conjugacy classes of nonparabolic maximal subgroups.


\section{Brenner and Lyndon's construction revisited}\label{B&Lrev}

In~\cite{BL83MA} (see also~\cite[\S 2]{Lyn}) Brenner and Lyndon went further and found examples of maximal nonparabolic subgroups of the modular group $\Gamma$ which are not Neumann subgroups. Again, their construction can be explained by using maps, namely quotients of the Petrie dual (explained below) of the trivalent tessellation $\mathcal M=\{6,3\}$ of the euclidean plane by regular hexagons. They first construct a nonparabolic normal subgroup $N$, which is maximal among all such subgroups, though not itself maximal parabolic; they then classify the maximal nonparabolic subgroups containing $N$, showing that these form a countably infinite set of conjugacy classes. None of these subgroups is maximal in $\Gamma$.

The construction in~\cite{BL83MA} is as follows. The isometry group of $\mathcal M$ is the extended triangle group $\Delta=\Delta[3,2,6]$, equivalently the $2$-dimensional euclidean crystallographic group $p6m$. It has three subgroups of index~$2$.
Two of these are obvious: the orientation-preserving subgroup, which is the triangle group $\Delta(3,2,6)=p6$, and the extended triangle group $\Delta[3,3,3]=p3m1$, generated by reflections in the sides of an equilateral triangle with a side along an edge of $\mathcal M$. The latter is also the subgroup of $\Delta$ preserving the black and white bipartite colouring of the vertices of $\mathcal M$. However, there is a third subgroup of index $2$, namely the `mixed' subgroup $Q=p31m$  consisting of the elements of $\Delta$ which either preserve the orientation and the colouring (forming the subgroup $\Delta(3,3,3)=p3$ of index $4$ in $\Delta$), or reverse them both; this subgroup has the form
\[Q=\langle R, S\mid R^2=S^3=(RS^{-1}RS)^3=1\rangle,\]
where $R$ is the reflection in an axis through the midpoints of two opposite edges of a hexagonal face, and $S$ and $RS^{-1}R$ are rotations about the vertices incident with one of those edges. 

This presentation of $Q$ shows that there is an epimorphism $\Gamma\to Q$ given by $X\mapsto S, Y\mapsto R$; its kernel is the normal closure $N$ of $(YX^{-1}YX)^3$ in $\Gamma$. By using coset diagrams, Brenner and Lyndon show that $N$ is nonparabolic, and is maximal among all nonparabolic normal subgroups of $\Gamma$. By considering the (well-known) subgroups of $Q$, they describe all the nonparabolic subgroups containing $N$, and in particular they identify the maximal nonparabolic subgroups among them. There are $\aleph_0$ such subgroups, and they show that none of them is a Neumann subgroup.

If we reinterpret their work in terms of maps, then $N$ corresponds to the Petrie dual $\mathcal N$ of the map $\mathcal M=\{6,3\}$. This is an embedding of the same graph as $\mathcal M$, but with the hexagonal faces replaced with new faces, following the Petrie paths of $\mathcal M$. These are zig-zag paths, alternately turning left and right at successive vertices; there are three parallel families of them, all of infinite length, so $\mathcal N$ has three mutually disjoint families of faces, all with infinitely many sides. A subgroup of $\Gamma$ containing $N$ is nonparabolic if and only if it induces no translations of $\mathcal M$ in the three directions of the Petrie paths, so that the corresponding quotient map of $\mathcal N$ has no finite faces. In all such cases there is more than one face, so the subgroup is not a Neumann subgroup.

By adapting our earlier constructions we can extend their results to give uncountably many nonparabolic maximal subgroups of $\Gamma=\Delta(p,q,\infty)$, for all $p\ge 3$ and $q\ge 2$, none of them Neumann subgroups. They arise from maps which have all the properties of a Neumann map, except that they have more than one face, so that the maximal parabolic subgroup $P$ is intransitive on $\Omega$ and therefore does not complement the stabilisers $M=\Gamma_{\alpha}$.

\begin{theo}\label{nonNeumann}
For each pair of integers $p\ge 3$ and $q\ge 2$ the group $\Gamma=\Delta(p,q,\infty)$ has uncountably many conjugacy classes of nonparabolic maximal subgroups which are not Neumann subgroups.
\end{theo}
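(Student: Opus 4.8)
The plan is to imitate the construction of the maps $\mathcal N_p$ and $\mathcal N_{p,q}$ from the proofs of Corollary~\ref{p3} and Theorem~\ref{q3}, but to replace the single face $F$ by at least two faces. Concretely, I would build a bipartite map $\mathcal M_{p,q}$ (or, when $q=2$, its monopartite reduction $\mathcal M_p^{\dagger}$) having all black vertices of valency $p$ or $1$ and all white vertices of valency $q$ or $1$, and having exactly two faces $F_1,F_2$, each infinite (so the associated subgroup is still nonparabolic). The skeleton of the construction is to take two copies of a semi-infinite ``backbone'' graph of the kind appearing in Figures~\ref{mapNp} and \ref{mapNpq}, each carrying one infinite face, and to splice them together along a finite portion — for instance by identifying a black vertex of one copy with a black vertex of the other, or by joining them with a short bridge — in such a way that the $z=(xy)^{-1}$-orbit structure has exactly two infinite cycles and no finite cycles. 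Since the map has two faces, $P=\langle Z\rangle$ is intransitive on $\Omega$, so $M=\Gamma_\alpha$ does \emph{not} complement $P$, and hence $M$ is not a Neumann subgroup: this is the content of the Proposition's converse direction (a Neumann subgroup forces $z$ to have a single cycle, i.e.\ one face).

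Next I would prove that $M$ is maximal, i.e.\ that $\Gamma$ acts primitively on $\Omega$. Here the situation is slightly more delicate than in \S\ref{proofq=2}--\S\ref{proofq>2}, because $z$ now has two infinite cycles rather than one, so a $\Gamma$-invariant equivalence relation $\sim$ need not restrict to a congruence mod~$(n)$ on a single copy of $\mathbb Z$; instead it restricts to a translation-invariant relation on each of the two $z$-orbits, giving a pair $(m_1,m_2)$ with $m_i\in\mathbb N\cup\{\infty\}$. The argument should still go through: label the edges of each face by $\mathbb Z$ using a base edge in that face, arrange (exactly as in the earlier proofs, by inserting extra $1$-valent vertices or fans) that in the first face there are edges fixed by $x$ and by $y$ whose labels are congruent mod~$(n)$ for every $n\ge 2$, so that Lemma~\ref{notinv} — applied to the first $z$-orbit — kills every finite $m_1$; and likewise arrange (or observe automatically, since the two backbones are spliced) an $x$- and $y$-fixed edge pair witnessing $m_2=1$ once the classes are constrained, or more simply use that the two faces are joined by an edge lying in a single $\Gamma$-orbit, so the relations forced on one orbit propagate to the other. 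In the end one concludes that $\sim$ is either the identity or the universal relation.

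Finally, to get $2^{\aleph_0}$ conjugacy classes I would, exactly as in the earlier proofs, attach $1$-valent black or white vertices (or fans of $p-1$, resp.\ $q-1$, edges) to an arbitrary subset of the terminal edges lying ``behind'' the labels used in the primitivity argument — e.g.\ to the negatively labelled free edges of one of the two backbones — so that the relabelling does not disturb the argument but yields uncountably many pairwise non-isomorphic maps, hence uncountably many conjugacy classes of stabilisers, all of which are nonparabolic, maximal, and non-Neumann. The main obstacle I anticipate is the primitivity step: with two $z$-orbits one must check that a $\Gamma$-invariant relation cannot be ``diagonal'' in some unexpected way — e.g.\ pairing up the two orbits under a finite-index identification — and ruling this out requires choosing the splicing so that the local vertex-valency data near the join are visibly incompatible with any such block system (again via a Lemma~\ref{notinv}-type observation at the join). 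Everything else is a routine adaptation of the figures and arguments already given for the one-faced case.
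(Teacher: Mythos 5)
Your outline follows the same route as the paper: a two-faced map of type $(p,q)$ with both faces infinite, Lemma~\ref{notinv} applied within each $z$-orbit to eliminate finite congruences, propagation between the two orbits via $Cy$ in the degenerate case, and decoration of terminal edges away from the labels used to produce $2^{\aleph_0}$ non-isomorphic maps. The parts you call routine are indeed routine, and your observation that two infinite $z$-cycles make $M$ nonparabolic but not a Neumann subgroup is correct.

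However, the step you yourself flag as ``the main obstacle'' is where the real content lies, and your proposal does not carry it out. Once $\sim$ restricts to the identity on each $z$-orbit $C$ and $C'$, a nontrivial invariant relation must have all blocks of size $2$, one element from each orbit. Your suggested fix --- ``a Lemma~\ref{notinv}-type observation at the join'' --- does not work as stated: Lemma~\ref{notinv} needs an $x$-fixed and a $y$-fixed edge known to lie in the \emph{same} block, and a priori you have no control over which element of $C'$ is paired with a given element of $C$. The missing idea is that the pairing must commute with all of $x$, $y$, $z$: the paper notes that $M=\Gamma_\alpha$ has index $2$ in $\Gamma_E$, hence is normal there, so the relation is induced by an order-$2$ orientation-preserving automorphism of the map interchanging the two faces. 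Commuting with $z$ forces this to be a shift $i\mapsto(i+c)'$, and since it must send free edges to free edges and vertices to vertices of the same valency, it is pinned down (in Figure~\ref{mapN}, to the half-turn about the centre) and is then destroyed by an asymmetric allocation of the optional $1$-valent vertices. Equivalently, one checks that the partner of an $x$-fixed (resp.\ $y$-fixed) directed edge must again be $x$-fixed (resp.\ $y$-fixed), so the label sets of such edges in $C$ and $C'$ would have to agree up to the constant shift $c$, which an asymmetric decoration prevents. Without some version of this argument your primitivity proof is incomplete; with it, your construction would go through essentially as in the paper.
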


\begin{figure}[h!]
\begin{center}
\begin{tikzpicture}[scale=0.155, inner sep=0.8mm]

\node (-3) at (-30,0) [shape=circle, fill=black] {};
\node (-2) at (-20,0) [shape=circle, fill=black] {};
\node (-1) at (-10,0) [shape=circle, fill=black] {};
\node (0) at (0,0) [shape=circle, fill=black] {};
\node (1) at (10,0) [shape=circle, fill=black] {};
\node (1-) at (10,-10) [shape=circle, fill=black] {};
\node (2) at (20,0) [shape=circle, fill=black] {};
\node (2+) at (20,10) [shape=circle, fill=black] {};
\node (3) at (30,0) [shape=circle, fill=black] {};
\node (4) at (40,0) [shape=circle, fill=black] {};
\node (5) at (50,0) [shape=circle, fill=black] {};
\node (6) at (60,0) [shape=circle, fill=black] {};

\draw [thick] (-34,0) to (64,0);
\draw [thick,dashed] (-38,0) to (-34,0);
\draw [thick,dashed] (64,0) to (68,0);

\draw [thick] (-3) to (-30,-10);
\draw [thick] (-2) to (-20,8.5);
\draw [thick] (-1) to (-10,-10);
\draw [thick] (0) to (0,8.5);
\draw [thick] (1) to (10,-10);
\draw [thick] (2) to (20,10);
\draw [thick] (3) to (30,-8.5);
\draw [thick] (4) to (40,10);
\draw [thick] (5) to (50,-8.5);
\draw [thick] (6) to (60,10);

\node at (2.5,1.5) {$-3$};
\node at (12.5,1.5) {$-2$};
\node at (18,3) {$-1$};
\node at (21.5,7) {$0$};
\node at (22.5,1.5) {$1$};
\node at (33,1.5) {$2$};
\node at (38.5,5) {$3$};
\node at (43,1.5) {$4$};
\node at (53,1.5) {$5$};
\node at (58.5,5) {$6$};

\node at (27,-1.5) {$-3'$};
\node at (17,-1.5) {$-2'$};
\node at (12.5,-3) {$-1'$};
\node at (8.5,-7) {$0'$};
\node at (7.5,-1.5) {$1'$};
\node at (-2.5,-1.5) {$2'$};
\node at (-7.5,-5) {$3'$};
\node at (-12.5,-1.5) {$4'$};
\node at (-22.5,-1.5) {$5'$};
\node at (-27.5,-5) {$6'$};

\node at (10,10) {$F$};
\node at (20,-10) {$F'$};

\node at (-20,10) {$?$};
\node at (0,10) {$?$};
\node at (30,-10) {$?$};
\node at (50,-10) {$?$};

\end{tikzpicture}
\end{center}
\caption{A set of trivalent maps $\mathcal N$ with two faces} 
\label{mapN}
\end{figure}
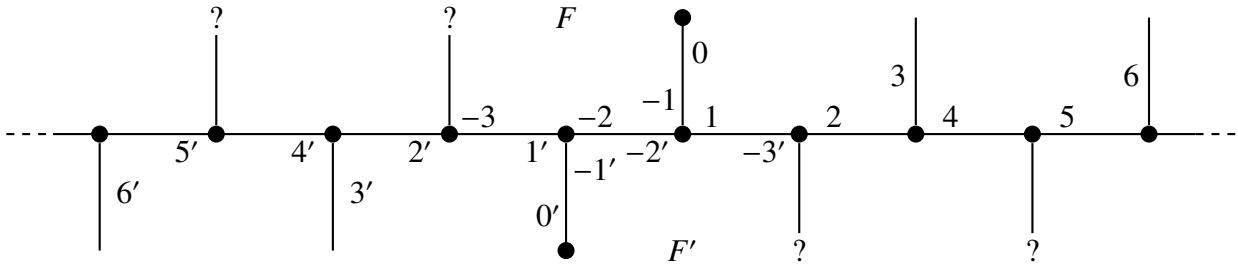

\begin{proof}
For simplicity of exposition, and to avoid repetition, we will give the proof only in the case of the modular group $\Gamma=\Delta(3,2,\infty)$. The extension to other groups $\Delta(p,q,\infty)$ is straightforward.

Figure~\ref{mapN}, which repeats in the obvious way to the right and left, represents $2^{\aleph_0}$ trivalent planar maps $\mathcal N$, where each question mark represents the possibility of either adding a $1$-valent vertex, or leaving the edge as a free edge, corresponding to a directed edge fixed by $x$ or $y$ respectively. Whatever choices are made, $\mathcal N$ represents a transitive permutation representation of $\Gamma$, in which the two faces $F$ and $F'$ correspond to the two infinite cycles $C$ and $C'$ of $z$ on the set $\Omega=C\cup C'$ of directed edges of $\mathcal N$. In each face $F$ or $F'$, a particular directed edge $\alpha$ or $\alpha'$ has been chosen, and each directed edge $\alpha z^i$ or $\alpha'z^i$ is labelled $i$ or $i'$, so that $z$ acts on $\Omega$ (now identified with the disjoint union of two copies of $\mathbb Z$) by $i\mapsto i+1$, $i'\mapsto (i+1)'$. Note that positive labels are independent of the choice of extra vertices, whereas negative labels are not, so they are mostly omitted in Figure~\ref{mapN}. 

We need to show that the extra vertices can be allocated so that $\Gamma$ acts primitively on $\Omega$. Any $\Gamma$-invariant equivalence relation $\sim$ on $\Omega$ must restrict to each of $C$ and $C'$ as congruence mod~$(n)$ or $(n')$ for some $n, n'\in{\mathbb N}\cup\{\infty\}$, where we again  include $1$ and $\infty$ to represent the universal and identity relations on $\mathbb Z$. As in the proof of Corollary~\ref{p3}, applying Lemma~~\ref{notinv} to the pairs of directed edges labelled $0, 3n$ and $0', 3n'$ shows that we must have $n, n'\in\{1, \infty\}$.

Suppose that $n=1$, so that all elements of $C$ are equivalent. It follows that all elements of $Cy$ are equivalent; this set includes distinct elements of $C'$ ($-2y=-2'$ and $1y=-3'$, for example), so $n'\ne \infty$ and hence $n'=1$, that is, all elements of $C'$ are equivalent. Since $Cy$ includes elements of $C$ (such as $0y=-1$), all elements of $Cy$ are in the same class as those in $C$, and hence $\sim$ is the universal relation on $\Omega$. By symmetry we obtain the same conclusion if $n'=1$, so we may assume that $n=n'=\infty$, that is, $\sim$ restricts to the identity relation on $C$ and on $C'$.

If $\sim$ is not the identity relation on $\Omega$, then each equivalence class must have size $2$, consisting of one element from each of $C$ and $C'$. This means that the stabiliser $M=\Gamma_{\alpha}$ of $\alpha$ in $\Gamma$ has index $2$ in the subgroup $\Gamma_E$ of $\Gamma$ preserving the equivalence class $E=[\alpha]$. Having index $2$, $M$ is normal in $\Gamma_E$, so $\sim$ is induced by a group of automorphisms $A\cong \Gamma_E/M\cong C_2$ of $\mathcal N$, transposing equivalent pairs of directed edges. The generator of $A$ must preserve the orientation of the plane, and must send free edges to free edges, and vertices to vertices of the same valency, so the only possibility is the half-turn about the centre of Figure~\ref{mapN}, transposing pairs $i$ and $i'$. However, we can eliminate this possibility by choosing the allocations of extra vertices above and below the horizontal axis so that they are not equivalent under this half-turn, for instance by allocating a black vertex at the rightmost question mark above the axis, but not at the leftmost question mark below it. This restriction still leaves us $2^{\aleph_0}$ possible allocations, and hence that number of conjugacy classes of maximal subgroups $M$ of $\Gamma$. As before, these subgroups are nonparabolic since $z$ has no finite cycles, but now they are not Neumann subgroups since $z$ has more than one cycle.
\end{proof}


\section{Consequences and generalisations}\label{conseq}

For a finitely generated group $\Gamma$, the following two properties are equivalent:

\begin{enumerate}
\item $\Gamma$ has uncountably many conjugacy classes of maximal subgroups of infinite index;
\item  $\Gamma$ has uncountably many maximal subgroups.
\end{enumerate}
Clearly (1) implies (2), and the converse depends on the facts that $\Gamma$ has only countably many subgroups of finite index, and that each subgroup of $\Gamma$ has only countably many conjugates. This equivalence remains valid even if one restricts attention to maximal subgroups satisfying some condition invariant under conjugation, such as being torsion-free, nonparabolic, etc.

If $\tilde\Gamma\to\Gamma$ is an epimorphism of groups, and $\Gamma$ has property (1) or (2), then $\tilde\Gamma$ also has that property. Thus the fact that the modular group has property (1) implies that it is shared by many other groups, such as non-abelian free groups, and hence by surface groups of genus $g>1$, etc. This has consequences outside group theory: for instance a Riemann surface of genus $g>1$ has uncountably many inequivalent coverings of infinite degree, each with no intermediate coverings.

In this paper, we have given explicit constructions for uncountable sets of conjugacy classes of maximal subgroups $M$ in hyperbolic triangle groups $\Delta(p,q,\infty)$. These subgroups are in fact `almost maximal' in the corresponding extended triangle groups $\Gamma^*=\Delta[p,q,\infty]$, in the sense that $\Gamma$ is the only subgroup $M^*$ of $\Gamma^*$ such that $M<M^*<\Gamma^*$: any other such subgroup must contain $M$ with index $2$, and induce an orientation-reversing automorphism of the corresponding map, whereas it is clear from the constructions that no such automorphism exists.

One might also consider maximal subgroups of triangle groups $\Gamma=\Delta(p,q,r)$ where $r$ is finite. Much is known about those of finite index, since the finite quotients of triangle groups have been intensively studied. However, much less seems to be known about those of infinite index.

These certainly exist. For example, by a result of Ol'shanski\u\i\/~\cite[Theorem~1]{Ols00}, any hyperbolic triangle group $\Gamma$ has a quotient $Q\ne 1$ with no proper subgroups of finite index. Since $Q$ is finitely generated,  Zorn's Lemma implies that it has maximal subgroups. These must have infinite index, so they lift back to maximal subgroups of infinite index in $\Gamma$. However, this argument gives us no information about the number of such subgroups, their structure, or the corresponding primitive permutation groups and bipartite maps.

One way of constructing specific examples is to adapt the Higman--Conder technique of `sewing coset diagrams together', used in~\cite{Con80, Con81} to construct finite alternating and symmetric quotients of $\Delta(2,3,r)$ and  $\Delta[2,3,r]$ for integers $r\ge 7$.

\begin{prop}\label{23r}
If $r\ge 7$ then the groups $\Delta(2,3,r)$ and $\Delta[2,3,r]$ each have uncountably many conjugacy classes of maximal subgroups of infinite index.
\end{prop}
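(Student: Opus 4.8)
The plan is to imitate the map-theoretic constructions of the earlier sections, replacing the single infinite face of a Neumann map by a controlled set of faces, but now working with the triangle group $\Delta(2,3,r)$ for finite $r\ge 7$. Since the generator of order $r$ is no longer parabolic, there is no privileged cyclic subgroup acting regularly on a face; instead the role of ``Neumann map'' will be played by a trivalent (or $(2,3)$-bipartite) map all of whose faces have valency a multiple of $r$, corresponding to a transitive permutation representation of $\Gamma$ in which $z$ has all cycle-lengths divisible by $r$, so that the point-stabilisers automatically have the right local structure. The Higman--Conder technique of sewing coset diagrams together, as used by Conder in \cite{Con80, Con81}, provides a supply of finite ``building blocks'' --- coset diagrams for $\Delta(2,3,r)$ with two or more ``handles'' (pairs of free edges or specified $r$-cycle fragments) that can be spliced to adjacent blocks. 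The idea is to arrange a bi-infinite sequence of such blocks along a spine, with binary choices at infinitely many junctions, so that $2^{\aleph_0}$ non-isomorphic transitive representations of $\Gamma$ on a countable set $\Omega$ are produced, all of infinite degree.

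First I would set up the building blocks: take from Conder's work a finite coset diagram for $\Delta(2,3,r)$ (or for the extended group $\Delta[2,3,r]$) which has two ``dangling ends'' at which it can be glued, and which contains at least one fixed point of the involution $X$ and one fixed point of the order-$3$ element $Y$ sitting in prescribed positions; Conder's constructions for $r\ge 7$ supply exactly this kind of flexibility, and in fact give two essentially different blocks, say $B_0$ and $B_1$, that can be interchanged at will. Second, I would concatenate a bi-infinite string $\dots B_{h_{-1}}B_{h_0}B_{h_1}\dots$ indexed by an arbitrary sequence $h=(h_i)\in\{0,1\}^{\mathbb Z}$, checking that the splicing preserves the defining relations $X^2=Y^3=(XY)^r=1$ --- i.e. that each face of the resulting infinite map still has valency divisible by $r$, so the representation really is a representation of $\Delta(2,3,r)$ and the point-stabilisers $M=\Gamma_\alpha$ are of infinite index (the map being infinite). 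Third, I would prove primitivity: because $\Gamma=\langle X,Y\rangle$ and the diagram contains fixed points of $X$ and of $Y$ that can be placed in the same block, a Lemma~\ref{notinv}-style argument (or more precisely a direct argument that any $\Gamma$-invariant equivalence relation on $\Omega$ whose classes are nontrivial must, via these fixed points, force a class to be all of $\Omega$) shows that $\Gamma$ acts primitively, so $M$ is maximal. Finally, distinct sequences $h$ (up to shift and reflection) give inequivalent representations and hence distinct conjugacy classes of $M$, yielding $2^{\aleph_0}$ of them; the same string of blocks, read inside Conder's diagrams for the extended group $\Delta[2,3,r]$, gives the statement for $\Delta[2,3,r]$, and since $\Delta(2,3,r)$ is a subgroup of index $2$ there one also recovers it for $\Delta(2,3,r)$ by restriction.

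The main obstacle I expect is the bookkeeping in the primitivity step. Unlike the parabolic case, here there is no single cycle of $z$ to pin down a translation-invariant equivalence relation on $\mathbb Z$ via ``congruence mod $n$'', so one cannot simply quote Lemma~\ref{notinv}; instead one must argue more carefully that the presence, somewhere in the bi-infinite diagram, of suitably located fixed points of $X$ and $Y$ forces any congruence to be trivial --- and one must make sure the block design actually guarantees such fixed points survive the gluing in the required relative position. A secondary technical point is verifying that the sewing genuinely yields a legitimate permutation representation of $\Delta(2,3,r)$ for \emph{every} choice of $h$ (the relation $(XY)^r=1$ is the delicate one under Higman--Conder surgery), which is precisely the kind of compatibility Conder checked for his finite quotients and which must be re-checked, or inherited, in the infinite setting. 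Once these two points are in hand, the cardinality count and the passage between $\Delta(2,3,r)$ and $\Delta[2,3,r]$ are routine, along the lines already used in the proofs of Theorems~\ref{q3} and \ref{nonNeumann}.
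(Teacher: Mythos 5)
Your overall strategy is the one the paper uses: build an infinite coset diagram for $\Delta[2,3,r]$ by the Higman--Conder sewing technique, with a binary choice made at infinitely many positions along the chain to produce $2^{\aleph_0}$ inequivalent transitive representations, then argue primitivity to get maximality. The paper's (also sketched) proof is concretely the chain $H(1)G(1)G(1)\cdots$ of Conder's diagrams from \cite{Con80} (resp.\ $V(h,d)$, $S(h,d)$, $U(h,d)$ from \cite{Con81} for general $r$), with the binary choice being whether or not to attach a copy of the diagram $A$ to each copy of $G$; so your ``blocks $B_0$, $B_1$'' are realised as $G$ with or without an $A$ attached, and your worry about the relation $(XY)^r=1$ surviving the surgery is exactly what Conder's $(1)$-composition is designed to guarantee.

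There are two points where your plan diverges from the paper in a way that matters. First, your passage between the two groups runs backwards: you propose to prove maximality of the stabiliser in $\Delta[2,3,r]$ and then ``recover it for $\Delta(2,3,r)$ by restriction.'' Primitivity does not descend to a transitive subgroup of index $2$ --- an invariant partition for the subgroup need not be invariant under the whole group --- so maximality in $\Delta[2,3,r]$ gives you nothing about maximality in $\Delta(2,3,r)$. The correct direction (and the one the paper takes) is to establish primitivity of the smaller group $\Delta(2,3,r)$ on $\Omega$, from which maximality of the point-stabilisers in \emph{both} groups follows at once. Second, for the primitivity step you hope for a Lemma~\ref{notinv}-style fixed-point argument, but as you yourself note, that lemma depends on the invariant relation being a congruence modulo $n$ on a single $z$-cycle, which is unavailable here; the paper instead relies on the ``useful cycle'' of length $17$ built into Conder's diagram $H$ (and its analogues in \cite{Con81}), which is the device Conder uses to rule out nontrivial block systems. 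So the acknowledged ``main obstacle'' in your write-up is genuinely unresolved as stated, and the known resolution goes through Conder's machinery rather than through fixed points of $X$ and $Y$. (To be fair, the paper itself defers full details to a later article, so at the level of a sketch your proposal and the paper's are comparable except for the index-$2$ slip.)
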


\begin{proof}
If $r=7$ one can use Conder's coset diagrams $G$ and $H$ in~\cite{Con80} to form an infinite diagram
$H(1)G(1)G(1)G(1)\cdots$,
where $(1)$ denotes $(1)$-composition, and then by $(1)$-compositions attach a copy of his diagram $A$ to each of an arbitrary subset of the copies of $G$ in this chain. This gives $2^{\aleph_0}$ inequivalent infinite transitive permutation representations of $\Delta[2,3,7]$. As in~\cite{Con80}, the `useful cycle' of length $17$ appearing in $H$ ensures that $\Delta(2,3,7)$ acts primitively in each case, so the point-stabilisers in both groups are maximal subgroups of infinite index. (Those in $\Delta(2,3,7)$ constructed in this way are free products of cyclic groups of order $2$ and $3$.) This argument can be extended to the case $r\ge 7$, with the roles of $A$, $G$ and $H$ now taken by the diagrams $V(h,d)$, $S(h,d)$ and $U(h,d)$ in~\cite{Con81}, where $r=h+6d$ with $d\in{\mathbb N}$ and $h=7,\ldots, 12$. 
\end{proof}

It is hoped to give full details of this proof in a later paper, together with some applications of maximal subgroups. It seems plausible that coset diagrams constructed by Everitt~\cite{Eve} and others could be used to extend this result to other triangle groups, and to more general Fuchsian groups. In particular, Theorem~\ref{CpCqnonpara} and Proposition~\ref{23r} suggest:

\begin{conj}
Every hyperbolic triangle group has uncountably many conjugacy classes of maximal subgroups of infinite index.
\end{conj}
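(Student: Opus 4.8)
A proof of the conjecture would naturally split according to whether $\Gamma=\Delta(p,q,r)$ is cocompact. If at least one of $p,q,r$ is infinite then, after permuting the parameters, $\Gamma$ is isomorphic to $C_p*C_q$, to $C_p*\mathbb{Z}$, or to $\mathbb{Z}*\mathbb{Z}$. In the first case $\Gamma$ is covered by Theorem~\ref{CpCqnonpara} directly, since in a hyperbolic triple with a single infinite entry one of $p,q$ is at least $3$ and the other at least $2$. In the remaining cases there is an obvious epimorphism onto a group already dealt with, namely $C_p*\mathbb{Z}\to C_p*C_2$ for $p\ge3$, $C_2*\mathbb{Z}\to C_2*C_3$, and $\mathbb{Z}*\mathbb{Z}\to C_3*C_2$. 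Since, as observed in \S\ref{conseq}, an epimorphism carries properties (1) and (2) from its image to its source, the whole non-cocompact case reduces to Theorem~\ref{CpCqnonpara}.

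The real content lies in the cocompact case, where $p,q,r$ are all finite with $1/p+1/q+1/r<1$ and Proposition~\ref{23r} already handles $(2,3,r)$ for $r\ge7$. The plan is to push the Higman--Conder technique of sewing coset diagrams together through the entire range of hyperbolic triples. For each such $(p,q,r)$ one would assemble a one-sided infinite transitive coset diagram for $\Delta(p,q,r)$ as a chain of finite blocks joined by $(1)$-compositions: a `head' block containing a useful cycle of suitable prime length (the analogue of Conder's diagrams $H$ in \cite{Con80} and $U(h,d)$ in \cite{Con81}), a repeated `body' block, and a `redundant' block that may or may not be inserted before each successive copy of the body. Indexing the set of insertions by an arbitrary subset of $\mathbb{N}$ then gives $2^{\aleph_0}$ pairwise inequivalent infinite transitive permutation representations, hence $2^{\aleph_0}$ conjugacy classes of point-stabilisers, all of infinite index, while the useful cycle is precisely the device forcing each of these actions to be primitive, so that the stabilisers are maximal. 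Conder's blocks in \cite{Con81} cover $r=h+6d$ with $7\le h\le12$, hence all $r\ge7$, but only for $(p,q)=(2,3)$; for general $(p,q)$ one would need analogous blocks, and the coset diagrams constructed for Fuchsian groups by Everitt~\cite{Eve} and others should supply the raw material.

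The main obstacle is exactly this uniform supply of blocks: one must exhibit, for every hyperbolic triple, a finite coset diagram carrying a useful cycle of prime length that forces primitivity, together with a compatible redundant block that can be repeated boundedly often without destroying it. For the alternating and symmetric quotients studied since Higman this has been achieved in many families, but, to my knowledge, not uniformly over all hyperbolic triples, and supplying it in general is the crux; the $2^{\aleph_0}$-counting, by contrast, is routine once primitivity is in hand. An alternative, coset-diagram-free route would start from the fact that every cocompact hyperbolic triangle group contains, by Selberg's Lemma, a finite-index surface subgroup of genus $\ge2$, which maps onto $F_2$, hence onto the modular group, and so has property (1) by \S\ref{conseq}; but this finishes the proof only if property (1) passes \emph{up} from a finite-index subgroup, and, unlike the epimorphism case, that inheritance is not routine---a priori a single finite-index maximal subgroup of $\Gamma$ might lie above uncountably many of the maximal subgroups of the subgroup---so it would itself require a separate argument.
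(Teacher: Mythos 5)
The statement you are addressing is stated in the paper as a \emph{conjecture}, and the paper offers no proof of it: the author proves only the non-cocompact cases $\Delta(p,q,\infty)$ (Theorem~\ref{CpCqnonpara}) and the cocompact cases $\Delta(2,3,r)$ for $r\ge 7$ (Proposition~\ref{23r}, itself only sketched, with full details deferred to a later paper), and then poses the general statement as an open problem suggested by these results. Your proposal is likewise not a proof, and you say so yourself: the ``main obstacle'' you identify --- exhibiting, for \emph{every} hyperbolic triple $(p,q,r)$ with all entries finite, a head block carrying a useful cycle that forces primitivity together with compatible body and redundant blocks --- is precisely the unresolved crux, and no construction is supplied. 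Asserting that Everitt's diagrams ``should supply the raw material'' is a plausible research direction (and is essentially the one the paper itself suggests after Proposition~\ref{23r}), but it is not an argument. So the honest verdict is: genuine gap, coinciding with the gap the paper itself acknowledges by labelling the statement a conjecture.

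That said, the parts of your proposal that are complete are correct and worth recording. The reduction of the non-cocompact case is sound: a hyperbolic triple with one infinite entry gives $C_p*C_q$ with $\{p,q\}\ne\{2,2\}$, covered by Theorem~\ref{CpCqnonpara}; the triples with two or three infinite entries give $C_p*\mathbb{Z}$ or $\mathbb{Z}*\mathbb{Z}$, which surject onto groups already handled, and the observation in \S\ref{conseq} that property (1) is inherited by the source of an epimorphism (preimages of distinct conjugacy classes of infinite-index maximal subgroups remain distinct conjugacy classes of infinite-index maximal subgroups) finishes those cases. You are also right to reject the shortcut via a finite-index surface subgroup: property (1) passes up along epimorphisms but there is no routine argument for passing it up from a finite-index subgroup $H\le\Gamma$, since a maximal subgroup of $H$ need not sit below a \emph{proper} subgroup of $\Gamma$ in any controlled way, and uncountably many maximal subgroups of $H$ could a priori collapse onto countably many of $\Gamma$. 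Flagging that correctly is a point in your favour, but it does not close the cocompact case, which remains open for all triples other than $(2,3,r)$, $r\ge7$.
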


Each conjugacy class of maximal subgroups constructed in this paper corresponds to a primitive permutation representation of infinite degree of some triangle group, acting as the monodromy group of the associated map or hypermap. It would be interesting to know more about these representations. For example, are they faithful, and if not, what are their kernels? Are they multiply transitive, and if not, which relations do they preserve?

As a simple example, if in \S\ref{Neumannrev} we take $h_l=1$ for all $l\ge 1$, then we obtain a doubly transitive representation of the modular group $\Gamma$, since (as is easily verified) the subgroup $\langle Y,\, Z^{-1}YZ,\, Z^{-3}XZ^3\rangle$ of the stabiliser $M_h$ of $0$ acts transitively on $\mathbb Z\setminus\{0\}$. How typical is this?

\bigskip

\thanks{{\bf Acknowledgement.} The author thanks Gabino Gonz\'alez-Diez for asking a question, answering which gave rise to this investigation, and Ashot Minasyan for drawing his attention to~\cite{Ols00}.}


\end{document}